\patchcmd{\@maketitle}{\LARGE \@title}{\LARGE\bfseries\@title}{}{}
\renewcommand{\@seccntformat}[1]{\csname the#1\endcsname.\quad}
\definecolor{darkblue}{rgb}{0,0,.5}
\def\th@plain{%
	\thm@notefont{}
	\itshape 
}
\def\th@definition{%
	\thm@notefont{}
	\normalfont 
}
\renewenvironment{proof}[1][\proofname]{\par
	\normalfont
	\topsep0\p@\@plus3\p@ \trivlist
	\item[\hskip\labelsep\itshape
	#1\@addpunct{.}]\ignorespaces
}{%
	\qed\endtrivlist
}
\newtheorem{theorem}{Theorem}[section]
\newtheorem{lemma}{Lemma}[section]
\theoremstyle{definition}
\newtheorem{definition}{Definition}[section]
\newtheorem{example}{Example}[section]
\newtheorem{remark}{Remark}[section]
\newtheorem{assumption}{Assumption}[section]
\newcommand{\argmin}{\ensuremath{\operatorname*{argmin}}}
\newcommand{\dom}{\ensuremath{\operatorname{dom}}}
\newcommand{\prox}{\ensuremath{\operatorname{Prox}}}
\newcommand{\dist}{\ensuremath{\operatorname{dist}}}
\begin{document}

\title{Variable Smoothing Alternating Proximal Gradient Algorithm for Coupled Composite Optimization\thanks{This work was supported by the National Natural Science Foundation of China (12271067), the NSF of Chongqing (CSTB2024NSCQ-MSX1282), the Education Committee Project Research Foundation of Chongqing (KJZD-K202500805), the Team Building Project for Graduate Tutors in Chongqing (yds223010), the Project of Chongqing Technology and Business University (yjscxx2025-269-238), and the Australian Research Council (ARC) Discovery Project DP230101749.}}

\author{Xian-Jun Long\thanks{\baselineskip 9pt School of Mathematics and Statistics, Chongqing Technology and Business University, Chongqing 400067, P.R.China. And Chongqing Key Laboratory of Statistical Intelligent Computing and Monitoring, Chongqing Technology and Business University, Chongqing, 400067, P.R.China. Email: xianjunlong@ctbu.edu.cn},
~Kang Zeng\thanks{\baselineskip 9pt School of Mathematics and Statistics, Chongqing Technology and Business University, Chongqing 400067, P.R.China. Email: zengkang000111@163.com},
~Gao-Xi Li\thanks{ School of Mathematics and Statistics, Chongqing Technology and Business University, Chongqing 400067, P.R.China. Email: ligaoxicn@126.com},
~Minh N. Dao\thanks{School of Science, RMIT University, Melbourne, VIC 3000, Australia. Email: minh.dao@rmit.edu.au},
~and Zai-Yun Peng\thanks{School of Mathematics, Yunnan Normal University, Kunming, 650092, P.R. China. Email: pengzaiyun@126.com} }

\date{October 31, 2025}

\maketitle

\begin{abstract}
In this paper, we consider a broad class of nonconvex and nonsmooth optimization problems, where one objective component is a nonsmooth weakly convex function composed with a linear operator. By integrating variable smoothing techniques with first-order methods, we propose a variable smoothing alternating proximal gradient algorithm that features flexible parameter choices for step sizes and smoothing levels. Under mild assumptions, we establish that the iteration complexity to reach an $\varepsilon$-approximate stationary point is $\mathcal{O}(\varepsilon^{-3})$. The proposed algorithm is evaluated on sparse signal recovery and image denoising problems. Numerical experiments demonstrate its effectiveness and superiority over existing algorithms.
\end{abstract}

\noindent {\bf Keywords}: Variable smoothing; Nonconvex nonsmooth optimization; Alternating proximal gradient; Weakly convex function; Complexity.

\noindent{\bf 2020 Mathematics Subject Classification.} 90C15, \and 90C30, \and 90C33.

\section{Introduction}
\label{sec:Introduction}

Consider the nonconvex nonsmooth composite optimization problem
\begin{equation}\label{eq:L1E1}
\min_{(x,y)\in \mathbb{R}^{n}\times\mathbb{R}^{m}} \mathcal{L}(x,y)=f(x) + g(Ay)+H(x,y),
\end{equation}
where $f:\mathbb{R}^{n}\rightarrow \mathbb{R}$ is a (possibly nonsmooth) convex function, $g:\mathbb{R}^{d}\rightarrow \mathbb{R}\cup \{+\infty\}$ is a nonsmooth and proper lower semicontinuous $\rho$-weakly convex function, $A:\mathbb{R}^{m} \rightarrow \mathbb{R}^{d}$ is a linear operator, and $H:\mathbb{R}^{n}\times \mathbb{R}^{m} \rightarrow \mathbb{R}$ is a continuously differentiable (possibly nonconvex) function. Such problem has a wide range of applications, including compressed sensing, machine learning, nonnegative matrix factorization, image denoising, signal recovery, and multimodal learning for image classification; see, for example \cite{BST,CP,CP1,DD,OBP,PN,TSR}.

When $A$ is the identity operator, model \eqref{eq:L1E1} has been discussed by many scholars in the literature; see, e.g., \cite{AB,BST,CL,GCH,PS,WH,ZD}. In particular, Bolte et al. \cite{BST} proposed the proximal alternating linearized minimization (PALM) algorithm
\begin{equation*}
\begin{aligned}
\left\{
\begin{array}{ll}
x_{k+1}\in \arg\min_{x\in \mathbb{R}^{n}} \{f(x)+\langle \nabla_{x}H(x_{k},y_{k}),x-x_{k}\rangle+\frac{c_{k}}{2}\|x-x_{k}\|^{2}\},\\
y_{k+1}\in \arg\min_{y\in \mathbb{R}^{m}} \{g(y)+\langle \nabla_{y}H(x_{k+1},y_{k}),y-y_{k}\rangle+\frac{d_{k}}{2}\|y-y_{k}\|^{2}\},
\end{array}
\right.
\end{aligned}
\end{equation*}
where $c_{k}>0$ and $d_{k}>0$. The global convergence result was proved using the Kurdyka--{\L}ojasiewicz property. Following this algorithm, Pock and Sabach presented the inertial version of PALM (iPALM) in \cite{PS}, Gao et al. introduced the Gauss--Seidel type inertial PALM algorithm (GiPALM) in \cite{GCH}, Wang and Han also presented a generalized inertial proximal alternating linearized minimization algorithm in \cite{WH}. These methods significantly improve computational efficiency.

It is worth mentioning that PALM, iPALM, and GiPALM require two evaluations of the proximal operator for nonconvex and nonsmooth functions. However, the proximal operator is generally difficult to calculate for a nonconvex and nonsmooth function.

On the other hand, smooth approximations for optimization problems have been extensively studied in recent years because they convert nonsmooth problems into smooth ones, thereby enabling efficient solutions via gradient descent methods. Bo\c{t} et al. \cite{BB,BC} presented variable smoothing algorithms for convex optimization problems. Bohm and Wright \cite{BW} later extended these results to the weakly convex case. Recently, Liu and Xia \cite{LX} proposed a proximal variable smoothing gradient algorithm for a nonconvex and nonsmooth minimization problem, which is a special case of problem \eqref{eq:L1E1}. They established an $\mathcal{O}(\varepsilon^{-3})$ complexity to achieve an $\varepsilon$-approximate solution.

Motivated by the works of \cite{GCH,BW,LX}, we propose a variable smoothing alternating proximal gradient algorithm to solve problem \eqref{eq:L1E1}. We construct a partially variable smoothed approximation of the objective by using a smooth approximation of $g$, known as the Moreau envelope and denoted by $g_{\mu}$,
\begin{align}\label{guanghua}
\min_{(x,y)\in \mathbb{R}^{n}\times\mathbb{R}^{m}} \mathcal{L}_\mu(x,y)=f(x)+g_{\mu}(Ay)+H(x,y).
\end{align}
This approximation explicitly separates the smooth components $(g_{\mu},H)$ from the nonsmooth term $f$, allowing efficient optimization via standard first-order methods with flexible choices of step sizes and smoothing parameters. Under appropriate assumptions, we establish a complexity bound of $O(\varepsilon^{-3})$ to find an $\varepsilon$-approximate stationary point.

The rest of this paper is organized as follows. Section~\ref{sec:Preliminaries} introduces key concepts and preliminary results. In Section~\ref{sec:Stationary} discusses approximate stationary points and the standing assumptions. Section~\ref{sec:Algorithm} presents the variable smoothing alternating proximal gradient algorithm and analyzes its convergence properties. Finally, numerical experiments in Section~\ref{sec:Numerical} demonstrate the effectiveness of the proposed algorithm.

\section{Preliminaries}
\label{sec:Preliminaries}

Let $\mathbb{R}^n$ be a finite-dimensional Euclidean space, which equipped with standard inner product $\langle \cdot,\cdot \rangle$ and norm $\|\cdot \|$, respectively. For any $x,y\in \mathbb{R}^n$, $\|(x,y)\|:=\sqrt{\|x\|^2+\|y\|^2}$. Given a nonempty set $C\subseteq \mathbb{R}^n$, the distance from $z\in \mathbb{R}^n$, the \emph{distance} from $z$ to $C$ is defined as
$\dist(z,C):=\inf_{w\in C}\|w-z\|$. Let $h: \mathbb{R}^n\rightarrow {\mathbb{R}}\cup \{+\infty\}$ be a mapping, the \emph{domain} of $h$ is defined by $\dom h:=\{x\in \mathbb{R}^n:h(x)<+\infty\}$. The function $h$ is said to be \emph{proper} if $\dom h \neq\varnothing$ and \emph{lower semicontinuous} if $h(x)\leq \liminf_{z\rightarrow x}h(z)$ for any $x\in \mathbb{R}^n$.

Let $F:\mathbb{R}^n\rightarrow \mathbb{R}\cup \{+\infty\}$ be a proper lower semicontinuous convex function and $\lambda>0$. The \emph{proximal operator} of $F$ at $v\in \mathbb{R}^n$ is defined by
\begin{align*}
\prox_{\lambda F}(v):=\argmin_{x\in \mathbb{R}^n} \left(F(x)+\frac{1}{2\lambda}\|x-v\|^2\right).
\end{align*}
It is known that $\prox_{\lambda F}$ is nonexpansive, i.e., for all $x,y\in \mathbb{R}^n$,
\begin{align*}
\|\prox_{\lambda F}(x)-\prox_{\lambda F}(y)\|\leq\|x-y\|.
\end{align*}
The \emph{regular subdifferential} of $F$ at $x\in\dom F$ is defined by
$$\widehat{\partial}F(x):=\left\{u\in\mathbb{R}^{n}:\liminf_{y\rightarrow x\atop y\neq x} \frac{\ F(y)- F(x) + \langle u, y-x \rangle}{\|y-x\|}\geq0\right\}.$$
The \emph{limiting subdifferential} of $F$ at $x\in\dom F$ is defined by
 \begin{align*}
\partial F(x) :=\{u \in\mathbb{R}^{n}: \exists\ x_{k}\rightarrow x, \widehat{\partial}F(x_{k}) \ni u_{k} \rightarrow u \ \mathrm{with} \  F(x_{k}) \rightarrow F(x)\}.
\end{align*}
When $F$ is a convex function, both subdifferentials coincide with the subdifferential in the sense of convex analysis, i.e.,
\begin{align*}
\widehat{\partial}F(x)=\partial F(x)=\{u\in\mathbb{R}^{n}: \forall y\in\mathbb{R}^{n},\quad F(y)-F(x)\geq\langle{u,y-x}\rangle\}.
\end{align*}
Clearly, $\widehat{\partial}F(x) \subseteq \partial F(x)$ for all $x\in \mathbb{R}^{n}$ and both of them are closed.  If $G:\mathbb{R}^n\rightarrow \mathbb{R}$ is  a continuously differentiable function, then $\partial G(x)=\{\nabla G(x)\}$ and $\partial(F+G)(x)=\partial F(x)+\nabla G(x)$ for all $x\in \mathbb{R}^n$, where $\nabla G(x)$ denotes the gradient of $G$ at $x$.

\begin{lemma}[{\cite[Theorem 2.64]{BC}}]
\label{l:descent}
Let $F:\mathbb{R}^n\rightarrow \mathbb{R}$ be a continuously differentiable function whose gradient $\nabla F$ is $L$-Lipschitz continuous with $L>0$. Then, for all $x,y\in \mathbb{R}^n$,
\begin{align*}
|F(y)-F(x)-\langle\nabla F(x),y-x\rangle|\leq\frac{L}{2}\|y-x\|^2.
\end{align*}
\end{lemma}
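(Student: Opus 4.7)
The plan is to prove this classical descent inequality by expressing the difference $F(y)-F(x)-\langle\nabla F(x),y-x\rangle$ as an integral along the line segment joining $x$ to $y$, and then bounding the integrand using the Lipschitz hypothesis. Since $F$ is continuously differentiable on all of $\mathbb{R}^n$, the function $t\mapsto F(x+t(y-x))$ is continuously differentiable on $[0,1]$, so the fundamental theorem of calculus applies.

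First, I would write
\begin{equation*}
F(y)-F(x)=\int_0^1 \langle \nabla F(x+t(y-x)),\,y-x\rangle\,dt,
\end{equation*}
and then subtract $\langle\nabla F(x),y-x\rangle=\int_0^1\langle\nabla F(x),y-x\rangle\,dt$ inside the integral to obtain
\begin{equation*}
F(y)-F(x)-\langle\nabla F(x),y-x\rangle=\int_0^1 \langle \nabla F(x+t(y-x))-\nabla F(x),\,y-x\rangle\,dt.
\end{equation*}

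Next I would take absolute values, push them inside the integral, and apply the Cauchy--Schwarz inequality followed by the $L$-Lipschitz continuity of $\nabla F$, which yields $\|\nabla F(x+t(y-x))-\nabla F(x)\|\le Lt\|y-x\|$. The resulting bound is
\begin{equation*}
\bigl|F(y)-F(x)-\langle\nabla F(x),y-x\rangle\bigr|\le L\|y-x\|^2\int_0^1 t\,dt=\frac{L}{2}\|y-x\|^2,
\end{equation*}
which is exactly the claimed inequality.

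There is no real obstacle here; the argument is entirely routine and hinges only on the fundamental theorem of calculus combined with Cauchy--Schwarz and the Lipschitz hypothesis. The only mild subtlety worth flagging is justifying the chain rule for $t\mapsto F(x+t(y-x))$, which follows from the $C^1$ assumption on $F$, and the interchange of absolute value with integration, which is standard for continuous integrands on a compact interval. Since this is quoted directly from \cite{BC}, I would in practice simply cite the reference and omit these steps, but the sketch above gives a self-contained argument.
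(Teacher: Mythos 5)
Your proof is correct and complete; it is the standard integral-plus-Cauchy--Schwarz argument for the descent lemma. The paper itself gives no proof of this statement (it is quoted with a citation to \cite{BC}), and your sketch reproduces exactly the textbook argument behind that reference, so there is nothing to compare beyond noting that you have supplied the details the paper omits.
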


\begin{definition}[\cite{V}] A function $F: \mathbb{R}^n\rightarrow (-\infty,+\infty]$  is said to be \emph{$\rho$-weakly convex} if $F +\frac{\rho}{2}\|\cdot\|^2$ is convex.
\end{definition}

\begin{remark}
Obviously, a smooth function having a Lipschitz gradient is weakly convex.
\end{remark}

\begin{definition}[{\cite[Definition 2.1]{BW}}] Let $F: \mathbb{R}^n\rightarrow (-\infty,+\infty]$ be a proper $\rho$-weakly convex lower semicontinuous function. The Moreau envelope function of $F$ is defined as
\begin{align*}
F_{\mu}(x):=\min_{y\in \mathbb{R}^n}\{F(y)+\frac{1}{2\mu}\|y-x\|^2\},
\end{align*}
where $\mu \in(0,1/\rho)$.
\end{definition}

\begin{lemma}
\label{l:weakcvx}
Let $F: \mathbb{R}^n\rightarrow (-\infty,+\infty]$ be a proper $\rho$-weakly convex and lower semicontinuous function, and let $\mu \in(0,1/\rho)$. Then
\begin{enumerate}
\item
\cite[Corollary 3.4]{HLO} The Moreau envelope function $F_{\mu}$ is continuously differentiable on $\mathbb{R}^n$ and, for all $x\in \mathbb{R}^n$,
\begin{align*}
\nabla F_{\mu}(x) =\frac{1}{\mu}(x-\prox_{\mu F}(x)).
\end{align*}
This gradient is $\max\{\frac{1}{\mu},\frac{\rho}{1-\rho \mu}\}$-Lipschitz continuous.
\item\label{l:weakcvx_inclu}
\cite[Lemma 3.2]{BW} $\nabla{F_{\mu}(x)}\in \partial F(\prox_{\mu F}(x))$.
\end{enumerate}
\end{lemma}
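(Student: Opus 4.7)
The plan is to reduce everything to the convex case by exploiting the decomposition $F = G - \tfrac{\rho}{2}\|\cdot\|^2$, where $G := F + \tfrac{\rho}{2}\|\cdot\|^2$ is proper, lower semicontinuous, and convex by weak convexity. First I would rewrite the inner problem as
\begin{equation*}
F(y) + \tfrac{1}{2\mu}\|y-x\|^2 = G(y) + \tfrac{1}{2}\bigl(\tfrac{1}{\mu}-\rho\bigr)\|y\|^2 - \tfrac{1}{\mu}\langle x,y\rangle + \tfrac{1}{2\mu}\|x\|^2.
\end{equation*}
Since $\mu\in(0,1/\rho)$ makes the coefficient $\tfrac{1}{\mu}-\rho>0$, the objective in $y$ is the sum of a proper lsc convex function and a strongly convex quadratic, so $\prox_{\mu F}(x)$ is well-defined, single-valued, and (by standard arguments) continuous in $x$.

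Next I would extract the first-order optimality condition $0 \in \partial F(\prox_{\mu F}(x)) + \tfrac{1}{\mu}(\prox_{\mu F}(x)-x)$, which rearranges to
\begin{equation*}
\tfrac{1}{\mu}(x - \prox_{\mu F}(x)) \in \partial F(\prox_{\mu F}(x)).
\end{equation*}
This already yields part (ii) once differentiability is established. For differentiability and the gradient formula in part (i), I would use an envelope/Danskin-type argument: strong convexity of the inner problem (modulus $\tfrac{1}{\mu}-\rho$) gives the quadratic growth estimate that upgrades the pointwise directional derivative identity $\nabla_x\bigl[F(y)+\tfrac{1}{2\mu}\|y-x\|^2\bigr] = \tfrac{1}{\mu}(x-y)$ at $y=\prox_{\mu F}(x)$ to a Fréchet derivative, producing $\nabla F_\mu(x)=\tfrac{1}{\mu}(x-\prox_{\mu F}(x))$.

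For the Lipschitz bound, let $y_i:=\prox_{\mu F}(x_i)$. Writing the optimality conditions in terms of $G$ gives $\tfrac{1}{\mu}(x_i-y_i)+\rho y_i \in \partial G(y_i)$, and monotonicity of $\partial G$ yields
\begin{equation*}
\langle x_1-x_2, y_1-y_2\rangle \geq (1-\rho\mu)\,\|y_1-y_2\|^2,
\end{equation*}
from which $\|y_1-y_2\| \leq \tfrac{1}{1-\rho\mu}\|x_1-x_2\|$. Since $\nabla F_\mu(x_1)-\nabla F_\mu(x_2) = \tfrac{1}{\mu}\bigl[(x_1-x_2)-(y_1-y_2)\bigr]$, expanding $\|(x_1-x_2)-(y_1-y_2)\|^2$ and using the inequality above leads to
\begin{equation*}
\|(x_1-x_2)-(y_1-y_2)\|^2 \leq \|x_1-x_2\|^2 + (2\rho\mu-1)\|y_1-y_2\|^2.
\end{equation*}

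The main obstacle, and the only place where real care is needed, is pinning down the sharp constant $\max\{\tfrac{1}{\mu},\tfrac{\rho}{1-\rho\mu}\}$ rather than a loose sum. I would handle this by splitting into cases: if $2\rho\mu\leq 1$, the right-hand side above is bounded by $\|x_1-x_2\|^2$, yielding the $\tfrac{1}{\mu}$ constant; if $2\rho\mu>1$, substituting $\|y_1-y_2\|\leq\tfrac{1}{1-\rho\mu}\|x_1-x_2\|$ collapses the right-hand side to $\tfrac{\rho^2\mu^2}{(1-\rho\mu)^2}\|x_1-x_2\|^2$, giving the $\tfrac{\rho}{1-\rho\mu}$ constant after dividing by $\mu$. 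Taking the maximum over both regimes produces the stated Lipschitz modulus, closing the proof.
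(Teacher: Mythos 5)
The paper does not actually prove this lemma---it imports part (i) from \cite[Corollary 3.4]{HLO} and part (ii) from \cite[Lemma 3.2]{BW} as cited facts---so your proposal supplies a self-contained argument where the paper offers only citations. Your route is the standard one in those references and it is correct: the decomposition $F=G-\tfrac{\rho}{2}\|\cdot\|^2$ with $G$ convex makes the inner problem strongly convex with modulus $\tfrac{1}{\mu}-\rho>0$, so the prox is single-valued; the Fermat rule with the exact sum rule for a smooth perturbation gives $\tfrac{1}{\mu}(x-\prox_{\mu F}(x))\in\partial F(\prox_{\mu F}(x))$, which is part (ii) once the gradient formula is in hand; and the Danskin/envelope step for differentiability is standard (and can be made fully rigorous by sandwiching $F_\mu(x_2)-F_\mu(x_1)$ between the two evaluations of the quadratic at the respective minimizers, then invoking the prox-Lipschitz bound $\|y_1-y_2\|\le\tfrac{1}{1-\rho\mu}\|x_1-x_2\|$ that you derive anyway). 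I checked your constant-chasing: monotonicity of $\partial G$ does give $\langle x_1-x_2,y_1-y_2\rangle\ge(1-\rho\mu)\|y_1-y_2\|^2$, the expansion yields $\|x_1-x_2\|^2+(2\rho\mu-1)\|y_1-y_2\|^2$, and the two regimes $\rho\mu\le\tfrac12$ and $\rho\mu>\tfrac12$ produce exactly $\tfrac{1}{\mu}$ and $\tfrac{\rho}{1-\rho\mu}$ respectively (indeed $(1-t)^2+2t-1=t^2$ with $t=\rho\mu$), matching the regimes in which each term attains the maximum. The only cosmetic gap is that the Danskin step is sketched rather than executed, but the ingredients you list suffice; nothing would fail if written out.
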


\begin{lemma}[{\cite[Lemma 3.3]{BW}}]\label{2.6} Let $F: \mathbb{R}^n\rightarrow (-\infty,\infty)$ be a $\rho$-weakly convex function and $L_F$-Lipschitz continuous, and let $\mu \in (0,1/\rho)$. Then the Moreau envelope $F_\mu$ is Lipschitz continuous and, for all $x\in \mathbb{R}^n$,
\begin{align*}
\|\nabla F_\mu(x)\|\leq L_F \text{~~and~~} \|x-\prox_{\mu F}(x)\|\leq \mu L_F.
\end{align*}
\end{lemma}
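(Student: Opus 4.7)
The plan is to combine the two identities furnished by Lemma~\ref{l:weakcvx}, namely the closed-form gradient $\nabla F_\mu(x) = \mu^{-1}\bigl(x - \prox_{\mu F}(x)\bigr)$ and the inclusion $\nabla F_\mu(x) \in \partial F\bigl(\prox_{\mu F}(x)\bigr)$, with the elementary observation that every limiting subgradient of an $L_F$-Lipschitz function has norm at most $L_F$. Once that subgradient bound is in hand, all three assertions follow in a short chain.

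First I would verify the subgradient bound. For any $u \in \widehat{\partial}F(y)$ and any unit vector $v \in \mathbb{R}^n$, testing the defining liminf with $z = y + tv$ and invoking $|F(z)-F(y)| \leq L_F \|z-y\|$ forces $\langle u, v\rangle \leq L_F$ as $t \downarrow 0$; taking the supremum over $v$ yields $\|u\| \leq L_F$, and the limiting subdifferential $\partial F(y)$ inherits the same bound via its sequential closure. Applying this with $u = \nabla F_\mu(x)$ at the base point $\prox_{\mu F}(x)$ gives the first inequality $\|\nabla F_\mu(x)\| \leq L_F$. Substituting this into the gradient formula then delivers the second inequality $\|x - \prox_{\mu F}(x)\| = \mu \|\nabla F_\mu(x)\| \leq \mu L_F$.

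For the Lipschitz continuity of $F_\mu$, Lemma~\ref{l:weakcvx} guarantees $F_\mu \in C^1(\mathbb{R}^n)$, and the uniform bound $\|\nabla F_\mu\| \leq L_F$ just obtained lets me apply the fundamental theorem of calculus along the segment joining any $x,y \in \mathbb{R}^n$ to conclude $|F_\mu(y) - F_\mu(x)| \leq L_F\|y-x\|$.

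The only step requiring genuine justification is the global bound on limiting subgradients of a Lipschitz function; this is standard variational analysis (equivalent to the well-known bound on the Clarke subdifferential of a locally Lipschitz map), so I do not expect it to be a real obstacle. Everything else is algebraic manipulation of the two identities in Lemma~\ref{l:weakcvx}.
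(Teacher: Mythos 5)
The paper offers no proof of this lemma at all: it is quoted directly from \cite[Lemma 3.3]{BW}, so there is no in-paper argument to compare yours against. Your proposal is correct and is essentially the standard proof of this fact (and, as far as I can tell, the same route taken in the cited source): the chain Lipschitz $\Rightarrow$ every regular (hence every limiting) subgradient has norm at most $L_F$, combined with the inclusion $\nabla F_\mu(x)\in\partial F(\prox_{\mu F}(x))$ and the identity $\nabla F_\mu(x)=\mu^{-1}(x-\prox_{\mu F}(x))$ from Lemma~\ref{l:weakcvx}, yields both pointwise bounds, and the Lipschitz continuity of $F_\mu$ then follows from the uniform gradient bound by the mean value theorem. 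Two minor remarks. First, the paper's displayed definition of $\widehat{\partial}F$ contains a sign typo ($+\langle u,y-x\rangle$ in the numerator rather than $-\langle u,y-x\rangle$); your liminf test with $z=y+tv$ implicitly uses the correct standard convention, which is what makes $\langle u,v\rangle\le L_F$ come out, so you may want to flag that you are working with the usual definition rather than the one as literally printed. Second, you silently use that $\prox_{\mu F}(x)$ is single-valued and that $F_\mu$ is finite; both are guaranteed because for $\mu<1/\rho$ the objective $y\mapsto F(y)+\tfrac{1}{2\mu}\|y-x\|^2$ is strongly convex and coercive (the Lipschitz bound on $F$ gives at most linear growth), which is exactly the content already packaged into Lemma~\ref{l:weakcvx}, so this is a presentational point rather than a gap.
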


\begin{lemma}[{\cite[Lemma~\ref{l:LL}]{BW}}]
\label{l:MoreauEnv}
Let $F: \mathbb{R}^n\rightarrow (-\infty,+\infty]$ be a proper $\rho$-weakly convex lower semicontinuous function, let $\mu_1$ and $\mu_2$ be parameters such that $0< \mu_2 \leq \mu_1 < 1/\rho$. Then
\begin{align*}
F_{\mu_2}(x)\leq F_{\mu_1}(x)+\frac{\mu_1(\mu_1-\mu_2)}{2\mu_2}\|
\nabla F_{\mu_1}(x)\|^2.
\end{align*}
If $F$ is additionally $L_F$-Lipschitz continuous, then
\begin{align*}
F_{\mu_2}\leq F_{\mu_1}+\frac{\mu_1(\mu_1-\mu_2)}{2\mu_2}(L_F)^2.
\end{align*}
\end{lemma}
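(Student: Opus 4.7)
The plan is to exploit the very definition of the Moreau envelope, combined with the gradient formula from Lemma~\ref{l:weakcvx}.

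First, fix $x\in\mathbb{R}^n$ and set $y_1:=\prox_{\mu_1 F}(x)$. Since $\mu_1\in(0,1/\rho)$, Lemma~\ref{l:weakcvx} guarantees that $y_1$ is the (unique) minimizer of $y\mapsto F(y)+\frac{1}{2\mu_1}\|y-x\|^2$, so that
\[
F_{\mu_1}(x)=F(y_1)+\frac{1}{2\mu_1}\|y_1-x\|^2,\qquad \|y_1-x\|=\mu_1\|\nabla F_{\mu_1}(x)\|.
\]
The key move is then to use $y_1$ as a feasible (but not necessarily optimal) point in the minimization defining $F_{\mu_2}(x)$, yielding
\[
F_{\mu_2}(x)\leq F(y_1)+\frac{1}{2\mu_2}\|y_1-x\|^2.
\]

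Next, I would subtract $F_{\mu_1}(x)$ from both sides, eliminating $F(y_1)$, to obtain
\[
F_{\mu_2}(x)-F_{\mu_1}(x)\leq \left(\frac{1}{2\mu_2}-\frac{1}{2\mu_1}\right)\|y_1-x\|^2 = \frac{\mu_1-\mu_2}{2\mu_1\mu_2}\|y_1-x\|^2.
\]
Substituting $\|y_1-x\|^2=\mu_1^2\|\nabla F_{\mu_1}(x)\|^2$ simplifies the factor to $\frac{\mu_1(\mu_1-\mu_2)}{2\mu_2}$, which is exactly the first stated inequality. The assumption $\mu_2\leq\mu_1$ ensures the coefficient is nonnegative, so the bound is meaningful.

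For the second inequality, under the additional hypothesis that $F$ is $L_F$-Lipschitz continuous, Lemma~\ref{2.6} gives $\|\nabla F_{\mu_1}(x)\|\leq L_F$ uniformly in $x$; plugging this bound into the first inequality yields the pointwise estimate, and since it holds for every $x$, we may rewrite it as the functional inequality $F_{\mu_2}\leq F_{\mu_1}+\frac{\mu_1(\mu_1-\mu_2)}{2\mu_2}(L_F)^2$. There is no serious obstacle here: the only subtle point is recognizing that the suboptimal-point trick (plugging $y_1$ into the definition of $F_{\mu_2}$) is the correct starting move, after which the algebra and the gradient-norm formula handle everything mechanically.
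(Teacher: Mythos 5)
Your argument is correct: taking $y_1=\prox_{\mu_1 F}(x)$ as a suboptimal candidate in the minimization defining $F_{\mu_2}(x)$, cancelling $F(y_1)$, and substituting $\|y_1-x\|=\mu_1\|\nabla F_{\mu_1}(x)\|$ gives exactly the stated coefficient $\tfrac{\mu_1(\mu_1-\mu_2)}{2\mu_2}$, and the Lipschitz case follows from Lemma~\ref{2.6}. The paper itself offers no proof of this lemma --- it is imported from B\"ohm and Wright \cite{BW} --- and your derivation is essentially the standard argument given there, so there is nothing to add.
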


We end this section by the following technical lemma.
\begin{lemma}\label{l:estimate}
Let $x\in [1, +\infty)$ and $\alpha\in [0, 1]$. Then
\begin{align*}
(1 +x)^{\alpha} - 1 \geq \alpha (\ln 2) x^{\alpha}.
\end{align*}
\end{lemma}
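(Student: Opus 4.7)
The plan is to fix $\alpha \in [0,1]$ and, viewing the inequality as a statement about the function $\psi(x) := (1+x)^{\alpha} - 1 - \alpha (\ln 2) x^{\alpha}$ on $[1,+\infty)$, reduce the problem to verifying (a) $\psi(1) \geq 0$ and (b) $\psi$ is nondecreasing on $[1,+\infty)$. Boundary cases $\alpha \in \{0,1\}$ are immediate, so the work is for $\alpha \in (0,1)$.

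For the base case $\psi(1) = 2^{\alpha} - 1 - \alpha \ln 2 \geq 0$, I would introduce the auxiliary function $u(\alpha) := 2^{\alpha} - 1 - \alpha \ln 2$. Since $u(0) = 0$ and $u'(\alpha) = (\ln 2)(2^{\alpha} - 1) \geq 0$ on $[0,+\infty)$, $u$ is nondecreasing and hence nonnegative on $[0,1]$.

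For the monotonicity, a direct computation gives
\begin{align*}
\psi'(x) = \alpha (1+x)^{\alpha - 1} - \alpha^{2} (\ln 2) x^{\alpha-1} = \alpha x^{\alpha-1}\bigl[(1+1/x)^{\alpha-1} - \alpha \ln 2 \bigr].
\end{align*}
Since $\alpha - 1 \leq 0$ and $1 + 1/x \leq 2$ on $[1,+\infty)$, the quantity $(1+1/x)^{\alpha-1}$ is minimized at $x=1$, where it equals $2^{\alpha-1}$. Therefore it suffices to show the auxiliary inequality $2^{\alpha-1} \geq \alpha \ln 2$ on $[0,1]$. I would prove this by setting $v(\alpha) := 2^{\alpha-1} - \alpha \ln 2$ and noting $v'(\alpha) = (\ln 2)(2^{\alpha-1} - 1) \leq 0$ on $[0,1]$, so $v$ attains its minimum on $[0,1]$ at $\alpha = 1$, giving $v(\alpha) \geq v(1) = 1 - \ln 2 > 0$.

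Combining these two facts yields $\psi(x) \geq \psi(1) \geq 0$ for all $x \geq 1$, which is the desired inequality. The only mildly subtle point — and hence the main obstacle — is identifying the right scalar inequality $2^{\alpha-1} \geq \alpha \ln 2$ on $[0,1]$ that makes $\psi$ monotone; once that is isolated, both it and the base case $\psi(1) \geq 0$ reduce to one-line monotonicity checks via a derivative of the form $(\ln 2)(2^{\beta} - 1)$ with a constant-sign on the relevant interval.
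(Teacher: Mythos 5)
Your proof is correct and follows essentially the same route as the paper: establish the base case $2^{\alpha}-1-\alpha\ln 2\geq 0$ by a derivative check in $\alpha$, then show $\psi'(x)\geq 0$ on $[1,+\infty)$ by bounding $(1+1/x)^{\alpha-1}$ below by its value at $x=1$. The only cosmetic difference is that you verify the scalar inequality $2^{\alpha-1}\geq\alpha\ln 2$ with a separate one-line monotonicity argument, whereas the paper deduces it from the already-proved base case via $2^{\alpha-1}=2^{\alpha}-2^{\alpha-1}\geq 2^{\alpha}-1$; both are valid.
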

\begin{proof}
Let $\varphi(x) :=(1 +x)^{\alpha} -1 - \alpha (\ln 2) x^{\alpha}$. Then $\psi(\alpha) =:\varphi(1) =2^{\alpha} -1 -\alpha \ln 2$. We see that, for all $\alpha \geq 0$,
$\psi'(\alpha) =(\ln 2) (2^{\alpha} -1) \geq 0$. It follows that $\varphi(1) =\psi(\alpha) \geq \psi(0) =0$.

On the other hand,
\begin{align*}
\varphi'(x) =\alpha(1 +x)^{\alpha -1} -\alpha^2 (\ln 2) x^{\alpha -1} =\alpha x^{\alpha -1}\left(\left(1 +\frac{1}{x}\right)^{\alpha -1} -\alpha \ln 2\right).
\end{align*}
As $x \geq 1$ and $\alpha -1 \leq 0$, it holds that
\begin{align*}
\left(1 +\frac{1}{x}\right)^{\alpha -1} -\alpha \ln 2 &\geq (1 + 1)^{\alpha -1} -\alpha \ln 2 =2^{\alpha} -2^{\alpha -1} -\alpha \ln 2 \\
&\geq 2^{\alpha} -1 -\alpha \ln 2 =\varphi(1) \geq 0.
\end{align*}
Therefore, for all $x\geq 1$, $\varphi'(x) \geq 0$, which implies that $\varphi(x) \geq \varphi(1) =0$. The proof is complete.
\end{proof}

\section{Approximate stationary points}
\label{sec:Stationary}

We begin this section by providing an equivalent condition for $(x, y)$ to be a stationary point of problem \eqref{eq:L1E1}. This characterization will be instrumental in formulating our approximate stationarity measure.

Define
\begin{align*}
G_\lambda(x,y) =\frac{1}{\lambda}(x -\prox_{\lambda f}(x -\lambda\nabla_x  H(x,y))).
\end{align*}
We have the following characterization.
\begin{lemma}
\label{l:stationary}
Let $(x,y)\in \mathbb{R}^n\times \mathbb{R}^m$ and $\lambda\in (0, +\infty)$. Then $0\in \partial\mathcal{L}(x,y)$ if and only if
\begin{align*}
\dist(0,G_\lambda(x,y)) +\dist(-\nabla_y H(x,y), A^*\partial g(Ay)) =0
\end{align*}
\end{lemma}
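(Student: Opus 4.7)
The plan is to reduce the stationarity condition $0\in\partial\mathcal{L}(x,y)$ to a pair of ``coordinate'' inclusions, and then to recast the $x$-coordinate one in terms of the prox-gradient residual $G_\lambda$. Since both terms on the right-hand side are nonnegative, the equality to zero is equivalent to each being zero, so the task is to match the two one-sided conditions with $0\in\partial_x\mathcal{L}(x,y)$ and $0\in\partial_y\mathcal{L}(x,y)$, respectively.

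First I would compute the limiting subdifferential of $\mathcal{L}$. Writing $\mathcal{L}(x,y)=f(x)+(g\circ A)(y)+H(x,y)$, the first two summands form a function that is \emph{separable} in $(x,y)$, so by the product rule for separable sums and the fact that $H$ is continuously differentiable,
\begin{align*}
\partial\mathcal{L}(x,y) = \bigl(\partial f(x)+\nabla_x H(x,y)\bigr)\times\bigl(\partial(g\circ A)(y)+\nabla_y H(x,y)\bigr).
\end{align*}
The chain rule $\partial(g\circ A)(y)=A^{*}\partial g(Ay)$ holds because $g$ is $\rho$-weakly convex: writing $g=\tilde g-\tfrac{\rho}{2}\|\cdot\|^2$ with $\tilde g$ convex and lower semicontinuous reduces the question to the convex chain rule for $\tilde g\circ A$, after which the smooth quadratic piece contributes $-\rho A^{*}Ay$ to both sides. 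Consequently,
\begin{align*}
0\in\partial\mathcal{L}(x,y) \iff -\nabla_x H(x,y)\in\partial f(x) \text{ and } -\nabla_y H(x,y)\in A^{*}\partial g(Ay).
\end{align*}
The second of these inclusions is exactly $\dist(-\nabla_y H(x,y),A^{*}\partial g(Ay))=0$.

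Next I would translate the $x$-inclusion into a statement about $G_\lambda$. Because $f$ is convex and lower semicontinuous, the standard prox--subgradient characterization gives, for any $\lambda>0$,
\begin{align*}
-\nabla_x H(x,y)\in\partial f(x) \iff 0\in\lambda\partial f(x)+\lambda\nabla_x H(x,y) \iff x=\prox_{\lambda f}\bigl(x-\lambda\nabla_x H(x,y)\bigr),
\end{align*}
where the last equivalence uses the fact that $u=\prox_{\lambda f}(v)$ iff $v-u\in\lambda\partial f(u)$. This last identity is precisely $G_\lambda(x,y)=0$, which (viewing the singleton $\{G_\lambda(x,y)\}$ as its own distance set) is $\dist(0,G_\lambda(x,y))=0$.

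Finally, combining the two equivalences: since both $\dist(0,G_\lambda(x,y))$ and $\dist(-\nabla_y H(x,y),A^{*}\partial g(Ay))$ are nonnegative, their sum vanishes iff each vanishes, which in turn is equivalent to $0\in\partial\mathcal{L}(x,y)$. The only genuinely nontrivial step is the chain rule for $\partial(g\circ A)$; I expect that to be the main obstacle, but the weakly convex decomposition above reduces it to the classical convex case, so no qualification beyond what is implicit in the model is needed.
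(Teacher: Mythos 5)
Your proposal is correct and follows essentially the same route as the paper: decompose $\partial\mathcal{L}$ into the two coordinate inclusions, convert the $x$-inclusion into the fixed-point identity $x=\prox_{\lambda f}(x-\lambda\nabla_x H(x,y))$ via the prox--subgradient characterization, and note that a sum of nonnegative distances vanishes iff each term does. The only difference is that you supply a justification of the chain rule $\partial(g\circ A)(y)=A^{*}\partial g(Ay)$ via the weakly convex decomposition, a step the paper simply asserts in its formula for $\partial\mathcal{L}$.
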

\begin{proof}
We first have that
\begin{align}\label{zhu2.1}
\partial\mathcal{L}(x,y)&=(\partial_x\mathcal{L}(x,y),\partial_y\mathcal{L}(x,y))\notag\\
&=(\partial f(x)+
\nabla_xH(x,y),A^*\partial g(Ay)+
\nabla_yH(x,y)).
\end{align}
Next, observe that
\begin{align*}
0\in \partial f(x)+
\nabla_x H(x,y)
&\iff 0\in \partial f(x) +\frac{1}{\lambda}(x -x +\lambda\nabla_x H(x,y)) \\
&\iff x = \argmin_{z\in \mathbb{R}^n} f(z) +\frac{1}{2\lambda}\|z -x +\lambda\nabla_x H(x,y)\|^2 \\
&\iff x = \prox_{\lambda f}(x -\lambda\nabla_x H(x,y)).
\end{align*}
Therefore,
\begin{align*}
0\in \partial\mathcal{L}(x,y)
&\iff
\begin{cases}
0\in \partial f(x)+
\nabla_x H(x,y), \\
0\in A^*\partial g(Ay)+
\nabla_y H(x,y)
\end{cases} \\
&\iff
\begin{cases}
x = \prox_{\lambda f}(x -\lambda\nabla_x H(x,y)), \\
-\nabla_y H(x,y) \in A^*\partial g(Ay).
\end{cases} \\
&\iff
\begin{cases}
\dist(0,G_\lambda(x,y)) =0, \\
\dist(-\nabla_y H(x,y), A^*\partial g(Ay)) =0,
\end{cases}
\end{align*}
which completes the proof.
\end{proof}

To establish the complexity bound of our proposed algorithms, we introduce a new convergence measure and define an $\varepsilon$-approximate stationary point of problem \eqref{eq:L1E1}. Traditionally, convex problems adopt the optimality gap $\mathcal{L}(x)-\mathcal{L}(x^*)$ as the convergence criterion (see, e.g., \cite{TSR}), while nonconvex and nonsmooth problems utilize the gradient mapping (see \cite{BW,GLZ,LX}). In view of Lemma~\ref{l:stationary}, we introduce the following definition of approximate stationarity.

\begin{definition}
Let $(x^*,y^*)\in \mathbb{R}^n\times \mathbb{R}^m$. We say that $(x^*,y^*)$ is an \emph{$\varepsilon$-approximate stationary point} of problem~\eqref{eq:L1E1} if
\begin{align}\label{wending}
\dist(0,G_\lambda(x^*,y^*))+\dist(-\nabla_yH(x^*,y^*),A^*\partial g(Ay^*))\leq\varepsilon.
\end{align}
Similarly, $(x^*,y^*)$ is said to be an \emph{$\varepsilon$-approximate stationary point} of problem~\eqref{guanghua} if
\begin{align}\label{wending1}
\dist(0,G_\lambda(x^*,y^*))+\dist(-\nabla_yH(x^*,y^*),A^*\nabla g_\mu(Ay^*))\leq\varepsilon.
\end{align}
\end{definition}

For problem~\eqref{eq:L1E1}, the following assumptions are to be considered.

\begin{assumption}
\label{a:H}
For any $y\in \mathbb{R}^m$, the partial gradient $\nabla_x H(\cdot,y)$ is $L_{11}$-Lipschitz continuous, i.e.,
\begin{align*}
\forall x_1,x_2\in \mathbb{R}^n,\quad \|\nabla_x H(x_1,y)-\nabla_x H(x_2,y)\|\leq L_{11}\|x_1-x_2\|.
\end{align*}
For any $x\in \mathbb{R}^n$, the partial gradient $\nabla_x H(x,\cdot)$ is $L_{12}$-Lipschitz continuous and the partial gradient $\nabla_y H(x,\cdot)$ is $L_{22}$-Lipschitz continuous, i.e.,
\begin{align*}
&\forall y_1,y_2\in \mathbb{R}^n,\quad \|\nabla_x H(x,y_1)-\nabla_x H(x,y_2)\|\leq L_{12}\|y_1-y_2\|,\\
&\forall y_1,y_2\in\mathbb{ R}^m,\quad \|\nabla_y H(x,y_1)-\nabla_y H(x,y_2)\|\leq L_{22}\|y_1-y_2\|.
\end{align*}
\end{assumption}

\begin{assumption}
\label{a:g}
The function $g$ is $\rho$-weakly convex and $L_g$-Lipschitz continuous. The function $\mathcal{L}$ is bounded below.
\end{assumption}

\begin{remark}
\label{r:G(x,.)}
From the Lipschitz continuity of $\nabla_x H(x,\cdot)$ in Assumption~\ref{a:H}, we have that for all $x\in \mathbb{R}^n$ and all $y_1,y_2\in \mathbb{R}^m$,
\begin{align*}
\|G_\lambda(x,y_1)&-G_\lambda(x,y_2)\| \notag\\
&=\frac{1}{\lambda}\|\prox_{\lambda f}(x-\lambda \nabla_x H(x,y_2))-\prox_{\lambda f}(x-\lambda \nabla_x H(x,y_1))\| \\
&\leq\|\nabla_x H(x,y_1)-\nabla_x H(x,y_2)\| \\
&\leq L_{12}\|y_1-y_2\|.
\end{align*}
\end{remark}

\begin{lemma}
\label{l:transfer}
Suppose that Assumptions \ref{a:H} and \ref{a:g} hold, $A$ is surjective, and $\mu\in(0,1/\rho)$.
Let $\bar{x} =x^*\in \mathbb{R}^n$ and $\bar{y}=y^*-A^*(AA^*)^{-1}(Ay^*-\prox_{\mu g}(Ay^*))$ with $y^*\in \mathbb{R}^m$. Then
\begin{align*}
&\dist(0,G_\lambda(\bar{x},\bar{y}))+\dist(-\nabla_y H(\bar{x},\bar{y}),A^*\partial g(A\bar{y})) \\
&\leq \dist(0,G_\lambda(x^*,y^*)) +\dist(-\nabla_yH(x^*,y^*),A^*\partial g(\prox_{\mu g}(Ay^*)))\\
&\quad +(L_{12}+L_{22})L_g\sigma_{\min}(A)^{-1}\mu.
\end{align*}
Consequently, if $(x^*,y^*)$ is an $\varepsilon$-approximate stationary point of problem~\eqref{guanghua}, then $(\bar{x},\bar{y})$ is an $\bar{\varepsilon}$-approximate stationary point of problem~\eqref{eq:L1E1} with $\bar{\varepsilon} =\varepsilon +(L_{12}+L_{22})L_g\sigma_{\min}(A)^{-1}\mu$.
\end{lemma}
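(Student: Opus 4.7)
The crucial observation is that by construction
\begin{align*}
A\bar{y} = Ay^* - AA^*(AA^*)^{-1}(Ay^* - \prox_{\mu g}(Ay^*)) = \prox_{\mu g}(Ay^*),
\end{align*}
so the sets $A^*\partial g(A\bar{y})$ and $A^*\partial g(\prox_{\mu g}(Ay^*))$ coincide, and the whole proof reduces to controlling $\|\bar{y} - y^*\|$ and propagating this perturbation through each of the two distance terms.

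To estimate $\|\bar{y} - y^*\|$, I would use that the surjectivity of $A$ makes $AA^*$ invertible, so that $A^*(AA^*)^{-1}$ is the Moore--Penrose pseudoinverse of $A$; a short SVD argument identifies its operator norm as $\sigma_{\min}(A)^{-1}$. Combined with Lemma~\ref{2.6}, which gives $\|Ay^* - \prox_{\mu g}(Ay^*)\| \leq \mu L_g$, this yields the master estimate
\begin{align*}
\|\bar{y} - y^*\| \leq \sigma_{\min}(A)^{-1} \mu L_g.
\end{align*}

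Since $\bar{x} = x^*$, Remark~\ref{r:G(x,.)} then gives $\|G_\lambda(\bar{x}, \bar{y}) - G_\lambda(x^*, y^*)\| \leq L_{12} \|\bar{y} - y^*\|$, while the Lipschitz continuity of $\nabla_y H(x^*,\cdot)$ from Assumption~\ref{a:H} gives $\|\nabla_y H(\bar{x}, \bar{y}) - \nabla_y H(x^*, y^*)\| \leq L_{22} \|\bar{y} - y^*\|$. Applying the elementary estimate $|\dist(p_1, S) - \dist(p_2, S)| \leq \|p_1 - p_2\|$ to the first term (with singleton $S = \{0\}$ and $p = G_\lambda(\cdot,\cdot)$) and to the second term (with $S = A^*\partial g(A\bar y) = A^*\partial g(\prox_{\mu g}(Ay^*))$ and $p = -\nabla_y H(\cdot,\cdot)$) and summing gives the stated inequality.

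For the consequence, Lemma~\ref{l:weakcvx}\ref{l:weakcvx_inclu} provides $\nabla g_\mu(Ay^*) \in \partial g(\prox_{\mu g}(Ay^*))$, hence $A^*\nabla g_\mu(Ay^*) \in A^*\partial g(\prox_{\mu g}(Ay^*))$, so
\begin{align*}
\dist(-\nabla_y H(x^*,y^*), A^*\partial g(\prox_{\mu g}(Ay^*))) \leq \dist(-\nabla_y H(x^*,y^*), A^*\nabla g_\mu(Ay^*)).
\end{align*}
Substituting into the main inequality and invoking \eqref{wending1} gives the $\bar{\varepsilon}$-approximate stationarity of $(\bar x, \bar y)$ for \eqref{eq:L1E1}. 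The only delicate step is the pseudoinverse norm identification $\|A^*(AA^*)^{-1}\|_{\mathrm{op}} = \sigma_{\min}(A)^{-1}$, which is exactly where the surjectivity hypothesis is used; everything else amounts to Lipschitz bounds and the triangle inequality for the point-to-set distance.
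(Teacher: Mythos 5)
Your proposal is correct and follows essentially the same route as the paper's proof: the identity $A\bar{y}=\prox_{\mu g}(Ay^*)$, the bound $\|\bar{y}-y^*\|\leq \sigma_{\min}(A)^{-1}\mu L_g$ via Lemma~\ref{2.6} and the operator norm of $A^*(AA^*)^{-1}$, the Lipschitz estimates from Remark~\ref{r:G(x,.)} and Assumption~\ref{a:H} combined with the $1$-Lipschitzness of the point-to-set distance, and Lemma~\ref{l:weakcvx}\ref{l:weakcvx_inclu} for the passage to \eqref{wending1}. No gaps.
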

\begin{proof}
Since $\bar{y}=y^*-A^*(AA^*)^{-1}(Ay^*-\prox_{\mu g}(Ay^*))$, multiplying both sides by $A$ yields $A\bar{y}=\prox_{\mu g}(Ay^*)$. We have that
\begin{align*}
&\dist(-\nabla_yH(\bar{x},\bar{y}),A^*\partial g(A\bar{y})) \\
&\leq \dist(-\nabla_yH(x^*,y^*),A^*\partial g(A\bar{y})) +\|\nabla_yH(x^*,y^*)-\nabla_yH(\bar{x},\bar{y})\| \\
&=\dist(-\nabla_yH(x^*,y^*),A^*\partial g(\prox_{\mu g}(Ay^*))) +\|\nabla_yH(x^*,y^*)-\nabla_yH(x^*,\bar{y})\| \\
&\leq \dist(-\nabla_yH(x^*,y^*),A^*\nabla g_{\mu}(Ay^*)) +L_{22}\|y^* -\bar{y}\|,
\end{align*}
where the last inequality uses $A^*\nabla g_{\mu}(Ay^*)\in A^*\partial g(\prox_{\mu g}(Ay^*))$ (see Lemma~\ref{l:weakcvx}\ref{l:weakcvx_inclu}) and the Lipschitz continuity of $\nabla_y H(x^*,\cdot)$.

Next, by Remark~\ref{r:G(x,.)},
\begin{align*}
\dist(0,G_\lambda(\bar{x},\bar{y})) &= \|G_\lambda(x^*,y^*) -G_\lambda(x^*,y^*) +G_\lambda(\bar{x},\bar{y})\| \\
&\leq \|G_\lambda(x^*,y^*)\| +\|G_\lambda(x^*,y^*) -G_\lambda(\bar{x},\bar{y})\| \\
&= \dist(0,G_\lambda (x^*,y^*)) +\|G_\lambda(x^*,y^*) -G_\lambda(x^*,\bar{y})\| \\
&\leq \dist(0,G_\lambda (x^*,y^*)) +L_{12}\|y^* -\bar{y}\|.
\end{align*}
Note that
\begin{align*}
\|y^* -\bar{y}\| =\|A^*(AA^*)^{-1}(Ay^*-\prox_{\mu g}(Ay^*))\| \leq \sigma_{\min}(A)^{-1}\mu L_g
\end{align*}
due to Lemma~\ref{2.6} and the fact that the operator norm of $A^*(AA^*)^{-1}$ is bounded by the inverse of the smallest singular value $\sigma_{\min}(A)$ of $A$, i.e., $\|A^*(AA^*)^{-1}\|\leq\sigma_{\min}(A)^{-1}$. Altogether, we obtain that
\begin{align*}
&\dist(0,G_\lambda(\bar{x},\bar{y})) +\dist(-\nabla_y H(\bar{x},\bar{y}),A^*\partial g(A\bar{y})) \\
&\leq   \dist(0,G_\lambda(x^*,y^*)) +\dist(-\nabla_yH(x^*,y^*),A^*\partial g(\prox_{\mu g}(Ay^*))) \\
&\quad+(L_{12} +L_{22})L_g\sigma_{\min}(A)^{-1}\mu,
\end{align*}
which completes the proof.
\end{proof}

\section{Proposed algorithm and complexity analysis}
\label{sec:Algorithm}

As introduced earlier, we define an approximate problem of \eqref{eq:L1E1} by the Moreau envelope of $g$ with variable parameter $\mu_k$ as
\begin{align}\label{app}
\min_{(x,y)\in \mathbb{R}^{n}\times\mathbb{R}^{m}}\mathcal{L}_{\mu_k}(x,y)=f(x)+g_{\mu_k}(Ay)+H(x,y).
\end{align}
Denote $\Phi_k(x,y):=g_{\mu_k}(Ay)+H(x,y)$. For the smoothed function $\Phi_k$, we have
\begin{align*}
\nabla_y\Phi_k(x,y) &=\frac{1}{\mu_k}A^{*}(Ay-\prox_{\mu_k g}(Ay))+\nabla_y H(x,y) \text{~~and}\\
\nabla_x\Phi_k(x,y) &=
\nabla_x H(x,y).
\end{align*}
From Assumptions~\ref{a:H} and \ref{a:g}, it can be seen that $\nabla_x\Phi_k(\cdot,y)$ and $\nabla_y\Phi_k(x,\cdot)$ are two Lipschitz continuous functions with Lipschitz constants $L_{11}$ and $L_k:=L_{22}+\|A\|^2\max\{\frac{1}{\mu_k},\frac{\rho}{1-\rho\mu_k}\}$, respectively. Increasing $L_{11}$, $L_{22}$, and/or $\rho$ if necessary, we can assume that $L_{12}^2 <L_{11}(L_{22} +2\rho\|A\|^2)$, which ensures the existence of $\alpha$ in the subsequent algorithm.

Motivated by the works in \cite{BB,BW,LX}, we propose the variable smooth alternating proximal gradient (VsaPG) algorithm to solve \eqref{eq:L1E1}, as detailed in Algorithm~\ref{algo}.

\begin{algorithm}[htb]
\caption{Variable smoothing alternating proximal gradient (VsaPG)}
\label{algo}

\textbf{Step 1.} Let $x^1 =\bar{x}^1\in \mathbb{R}^{n}$, $y^1 =\bar{y}^1\in \mathbb{R}^{m}$, $\mu_1\in (0, 1/\rho)$, and set $k =1$. Let $\gamma\in (0, +\infty)$, $\eta\in (0, +\infty)$, $\theta\in (0, 1)$, $\beta\in(0, 1]$, $\sigma\in (0, 2/L_{11})$, and let $\alpha\in (0, 1)$ be such that
\begin{align*}
1 -\alpha^2 -\frac{L_{12}^2(1 +\alpha)^2}{L_{11}(L_{22} +2\rho\|A\|^2)} >0.
\end{align*}

\textbf{Step 2.} Let $\mu_{k+1}\in [\mu_k/2, \mu_k]$, $\alpha_k\in [-\alpha, \alpha]$, $\beta_k\in [\beta,1]$, $\sigma_k\in [\gamma k^{-\theta}, \sigma]$, and $\tau_k\in [\eta k^{-\theta}, 1/L_k]$. Compute
\begin{align}
\label{s21}&y^{k+1}=\bar{y}^k-\tau_k(A^*\nabla g_{\mu_k}(A\bar{y}^{k})+\nabla_y H(\bar{x}^k,\bar{y}^k)),\\
\label{s41}&\bar{y}^{k+1}=y^{k+1}+\alpha_k(y^{k+1}-\bar{y}^k),\\
\label{s31}&x^{k+1}=\prox_{\sigma_k f}(\bar{x}^k-\sigma_k\nabla_x H(\bar{x}^k,\bar{y}^{k+1})),\\
\label{s42}&\bar{x}^{k+1} =(1 -\beta_k)\bar{x}^k +\beta_kx^{k+1}.
\end{align}

\textbf{Step 3.} If a termination criterion is not met, set $k =k+1$ and go to Step 2.
\end{algorithm}

\begin{remark}[Special cases]
\begin{enumerate}
\item
If $\alpha_k = 0$ and $\beta_k = 1$, then Step~2 of Algorithm~\ref{algo} reduces to
\begin{align*}
&\bar{y}^{k+1}=\bar{y}^k-\tau_k\left(A^*\nabla g_{\mu_k}(A\bar{y}^{k})+\nabla_y H(\bar{x}^k,\bar{y}^k)\right),\\
&\bar{x}^{k+1}=\prox_{\sigma_k f}\left(\bar{x}^k-\sigma_k\nabla_x H(\bar{x}^k,\bar{y}^{k+1})\right).
\end{align*}
\item
If $f(x)\equiv 0$ and $H(x,y)\equiv h(y)$ depends only on $y$, then problem~\eqref{eq:L1E1} reduces to
\[
\min_{y\in\mathbb{R}^m} g(Ay)+h(y).
\]
In this case, when $\alpha_k = 0$ and $\beta_k = 1$, Step~2 of Algorithm~\ref{algo} becomes
\[
\bar{y}^{k+1}=\bar{y}^k-\tau_k\left(A^*\nabla g_{\mu_k}(A\bar{y}^{k})+\nabla h(\bar{y}^k)\right),
\]
which coincides with \cite[Algorithm~1]{BW} if $\mu_k=(2\rho)^{-1}k^{-1/3}$ and $\tau_k=1/L_k$.
\end{enumerate}
\end{remark}

Recalling that $G_\lambda(x,y) =\frac{1}{\lambda}(x -\prox_{\lambda f}(x -\lambda\nabla_x  H(x,y)))$, we have
\begin{align}\label{zhu4.2}
x^{k+1}&=\prox_{\sigma_k f}(\bar{x}^k-\sigma_k\nabla_x H(\bar{x}^k,\bar{y}^{k+1}))\notag \\
&=\bar{x}^k-\sigma_k\left(\frac{1}{\sigma_k}(\bar{x}^k-\prox_{\sigma_k f}(\bar{x}^k-\sigma_k\nabla_x H(\bar{x}^k,\bar{y}^{k+1})))\right)\notag \\
&=\bar{x}^k-\sigma_k G_{\sigma_k}(\bar{x}^k,\bar{y}^{k+1}).
\end{align}
We now arrive at the following lemmas.

\begin{lemma}
\label{l:LL}
Suppose that Assumptions \ref{a:H} and \ref{a:g} hold. Let $(\bar{x}^k,\bar{y}^k)$ be the sequence generated by Algorithm~\ref{algo}. Then
\begin{align*}
\mathcal{L}_{\mu_{k}}(\bar{x}^{k+1},\bar{y}^{k+1}) &\leq \mathcal{L}_{\mu_{k}}(\bar{x}^k,\bar{y}^k)
-\frac{1}{2}\delta\tau_k\|A^*\nabla g_{\mu_k}(A\bar{y}^k)+\nabla_y H(\bar{x}^k,\bar{y}^k)\|^2\notag \\
&\quad -\frac{1}{4}\kappa\sigma_k\|G_{\sigma_k}(\bar{x}^k,\bar{y}^k)\|^2 +(\mu_k-\mu_{k+1})L_g^2,
\end{align*}
where $\delta :=1 -\alpha^2 -\frac{L_{12}^2(1+\alpha)^2}{L_{11}(L_{22} +2\rho\|A\|^2)} >0$ and $\kappa :=\min\{(2 -L_{11}\beta\sigma)\beta, 2 -L_{11}\sigma\} >0$.
\end{lemma}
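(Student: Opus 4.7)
The plan is to split the drop $\mathcal{L}_{\mu_k}(\bar{x}^{k+1},\bar{y}^{k+1})-\mathcal{L}_{\mu_k}(\bar{x}^k,\bar{y}^k)$ into an $x$-part and a $y$-part, bound each via a partial descent inequality, re-center the $x$-gradient map via Remark~\ref{r:G(x,.)}, and account for the change of smoothing parameter through Lemma~\ref{l:MoreauEnv}.

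For the $y$-block, $\Phi_k(\bar{x}^k,\cdot)$ has $L_k$-Lipschitz gradient, and by \eqref{s21}--\eqref{s41} we have $\bar{y}^{k+1}-\bar{y}^k=-(1+\alpha_k)\tau_k\nabla_y\Phi_k(\bar{x}^k,\bar{y}^k)$. Applying Lemma~\ref{l:descent} at $\bar{y}^k,\bar{y}^{k+1}$ with constant $L_k$ and using $\tau_k\leq 1/L_k$ yields
\[
\Phi_k(\bar{x}^k,\bar{y}^{k+1})-\Phi_k(\bar{x}^k,\bar{y}^k)\leq -\tfrac{(1-\alpha_k^2)\tau_k}{2}\bigl\|\nabla_y\Phi_k(\bar{x}^k,\bar{y}^k)\bigr\|^2.
\]
For the $x$-block I would apply Lemma~\ref{l:descent} to $H(\cdot,\bar{y}^{k+1})$ with constant $L_{11}$, invoke convexity of $f$ on the relaxation $\bar{x}^{k+1}=(1-\beta_k)\bar{x}^k+\beta_k x^{k+1}$, and use the subgradient inclusion $G_{\sigma_k}(\bar{x}^k,\bar{y}^{k+1})-\nabla_xH(\bar{x}^k,\bar{y}^{k+1})\in\partial f(x^{k+1})$ coming from the proximal optimality of $x^{k+1}$. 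The $\langle\nabla_xH,G\rangle$ cross terms then cancel, leaving
\[
[f+H](\bar{x}^{k+1},\bar{y}^{k+1})-[f+H](\bar{x}^k,\bar{y}^{k+1})\leq -\tfrac{\beta_k(2-L_{11}\beta_k\sigma_k)\sigma_k}{2}\bigl\|G_{\sigma_k}(\bar{x}^k,\bar{y}^{k+1})\bigr\|^2,
\]
and a short case analysis over $\beta_k\in[\beta,1]$ and $\sigma_k\in(0,\sigma]$ shows $\beta_k(2-L_{11}\beta_k\sigma_k)\geq\kappa$.

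To replace $\|G_{\sigma_k}(\bar{x}^k,\bar{y}^{k+1})\|^2$ by $\|G_{\sigma_k}(\bar{x}^k,\bar{y}^k)\|^2$ I would combine the elementary inequality $\|a\|^2\geq\tfrac{1}{2}\|b\|^2-\|a-b\|^2$ with Remark~\ref{r:G(x,.)}, which gives $\|G_{\sigma_k}(\bar{x}^k,\bar{y}^{k+1})-G_{\sigma_k}(\bar{x}^k,\bar{y}^k)\|\leq L_{12}\|\bar{y}^{k+1}-\bar{y}^k\|$. The resulting penalty $\tfrac{\kappa\sigma_kL_{12}^2(1+\alpha_k)^2\tau_k^2}{2}\|\nabla_y\Phi_k(\bar{x}^k,\bar{y}^k)\|^2$ is absorbed into the $y$-coefficient using $\sigma_k\leq\sigma<2/L_{11}$, the bound $\tau_k\leq 1/L_k\leq 1/(L_{22}+2\rho\|A\|^2)$ (valid because $\max\{1/\mu_k,\rho/(1-\rho\mu_k)\}\geq 2\rho$ for all $\mu_k\in(0,1/\rho)$, with equality at $\mu_k=1/(2\rho)$), and $(1+\alpha_k)^2\leq(1+\alpha)^2$; the combined coefficient of $\|\nabla_y\Phi_k\|^2$ then becomes at most $-\delta\tau_k/2$ under the standing positivity assumption on $\delta$. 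The residual additive term $(\mu_k-\mu_{k+1})L_g^2$ arises from the Moreau transition: the Lipschitz form of Lemma~\ref{l:MoreauEnv} applied to $g$ with $(\mu_1,\mu_2)=(\mu_k,\mu_{k+1})$ together with $\mu_{k+1}\geq\mu_k/2$ bounds the slack $\tfrac{\mu_k(\mu_k-\mu_{k+1})}{2\mu_{k+1}}L_g^2$ by $(\mu_k-\mu_{k+1})L_g^2$.

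The main obstacle will be the bookkeeping in the coupling step, where $\kappa$, $\sigma_k\tau_k$, $(1+\alpha_k)^2$, and $L_{12}^2$ must interact to reproduce \emph{exactly} the stated $\delta$; the crucial ingredient is the uniform lower bound $L_k\geq L_{22}+2\rho\|A\|^2$, without which $\tau_k$ is not small enough and the penalty from Remark~\ref{r:G(x,.)} cannot be re-absorbed to match $\delta$. All other manipulations are standard descent-lemma algebra.
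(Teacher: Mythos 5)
Your proposal is correct and follows essentially the same route as the paper: a descent-lemma bound on each block, prox optimality plus convexity of $f$ to cancel the $\langle\nabla_x H,G_{\sigma_k}\rangle$ cross terms, the re-centering of $G_{\sigma_k}$ via Remark~\ref{r:G(x,.)} (which is exactly where the factor $\tfrac{1}{4}$ and the penalty term come from), and the Moreau-envelope transition for the $(\mu_k-\mu_{k+1})L_g^2$ term, with $L_k\geq L_{22}+2\rho\|A\|^2$ doing the final absorption. The only bookkeeping you leave implicit is that the absorption requires the quadratic (AM--GM) bound $(2-L_{11}\beta_k\sigma_k)\beta_k\sigma_k\leq 1/L_{11}$ rather than merely $\sigma_k\leq\sigma<2/L_{11}$; with that one line added, your combined coefficient reproduces the stated $\delta$ exactly.
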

\begin{proof}
From the Lipschitz continuity of $\nabla_x \Phi_k(\cdot,\bar{y}^{k+1})$ and $\nabla_y\Phi_k(\bar{x}^k, \cdot)$, Lemma \ref{l:descent} implies that
\begin{align*}
\Phi_k(\bar{x}^{k+1},\bar{y}^{k+1}) &\leq\Phi_k(\bar{x}^{k},\bar{y}^{k+1}) +\langle\nabla_x\Phi_k(\bar{x}^k,\bar{y}^{k+1}),\bar{x}^{k+1}-\bar{x}^k\rangle+\frac{L_{11}}{2}\|\bar{x}^{k+1}-\bar{x}^k\|^2, \\
\Phi_k(\bar{x}^k,\bar{y}^{k+1}) &\leq \Phi_k(\bar{x}^k,\bar{y}^k) +\langle\nabla_y\Phi_k(\bar{x}^k,\bar{y}^k),\bar{y}^{k+1}-\bar{y}^k\rangle+\frac{L_k}{2}\|\bar{y}^{k+1}-\bar{y}^k\|^2.
\end{align*}
We note that
\begin{align}\label{eq:grad_y Phi_k}
\nabla_y\Phi_k(\bar{x}^k,\bar{y}^k) =A^*\nabla g_{\mu_k}(A\bar{y}^k)+\nabla_y H(\bar{x}^k,\bar{y}^k)
\end{align}
and from \eqref{s21}, \eqref{s41}, \eqref{s42}, and \eqref{zhu4.2} that
\begin{align*}
\bar{x}^{k+1}-\bar{x}^k &=\beta_k(x^{k+1}-\bar{x}^k)=-\beta_k\sigma_k G_{\sigma_k}(\bar{x}^k,\bar{y}^{k+1}), \\
\bar{y}^{k+1}-\bar{y}^k &=(1+\alpha_k)(y^{k+1}-\bar{y}^k) =-(1+\alpha_k)\tau_k \nabla_y\Phi_k(\bar{x}^k,\bar{y}^k).
\end{align*}
Therefore,
\begin{align}\label{8.7}
&\Phi_k(\bar{x}^{k+1},\bar{y}^{k+1}) \notag \\
&\leq \Phi_k(\bar{x}^k,\bar{y}^k) +\langle\nabla_x\Phi_k(\bar{x}^k,\bar{y}^{k+1}),\bar{x}^{k+1}-\bar{x}^k\rangle+\frac{L_{11}}{2}\|\bar{x}^{k+1}-\bar{x}^k\|^2 \notag \\
&\quad +\langle\nabla_y\Phi_k(\bar{x}^k,\bar{y}^k),\bar{y}^{k+1}-\bar{y}^k\rangle+\frac{L_k}{2}\|\bar{y}^{k+1}-\bar{y}^k\|^2 \notag \\
&= \Phi_k(\bar{x}^k,\bar{y}^k) -\beta_k\sigma_k\langle\nabla_x\Phi_k(\bar{x}^k,\bar{y}^{k+1}),G_{\sigma_k}(\bar{x}^k,\bar{y}^{k+1})\rangle \notag \\
&\quad +\frac{1}{2}L_{11}\beta_k^2\sigma_k^2\|G_{\sigma_k}(\bar{x}^k,\bar{y}^{k+1})\|^2 \notag\\
&\quad -\frac{1}{2}(2 +2\alpha_k -L_k(1+\alpha_k)^2\tau_k)\tau_k\|\nabla_y\Phi_k(\bar{x}^k,\bar{y}^k)\|^2.
\end{align}
By Lemma \ref{l:MoreauEnv} and the fact that $\mu_{k+1}\in [\mu_k/2, \mu_k]$,
\begin{align}\label{8.8}
g_{\mu_{k+1}}(Ay)\leq g_{\mu_k}(Ay) +\frac{\mu_k(\mu_k-\mu_{k+1})}{2\mu_{k+1}}L_g^2\leq g_{\mu_k}(Ay) +(\mu_k-\mu_{k+1})L_g^2.
\end{align}
Adding $H(x,y)$ to both sides of \eqref{8.8} yields
\begin{align}\label{3.4}
\Phi_{k+1}(x,y)\leq \Phi_k(x,y) +(\mu_k-\mu_{k+1})L_g^2.
\end{align}
Letting $x =\bar{x}^{k+1}$, $y =\bar{y}^{k+1}$ in \eqref{3.4} and combining it with \eqref{8.7}, we obtain that
\begin{align}\label{4.4}
&\Phi_{k+1}(\bar{x}^{k+1},\bar{y}^{k+1}) \notag \\
&\leq \Phi_k(\bar{x}^k,\bar{y}^k) -\beta_k\sigma_k\langle\nabla_x\Phi_k(\bar{x}^k,\bar{y}^{k+1}),G_{\sigma_k}(\bar{x}^k,\bar{y}^{k+1})\rangle \notag \\
&\quad +\frac{1}{2}L_{11}\beta_k^2\sigma_k^2\|G_{\sigma_k}(\bar{x}^k,\bar{y}^{k+1})\|^2 \notag\\
&\quad -\frac{1}{2}(2 +2\alpha_k -L_k(1+\alpha_k)^2\tau_k)\tau_k\|\nabla_y\Phi_k(\bar{x}^k,\bar{y}^k)\|^2 +(\mu_k-\mu_{k+1})L_g^2.
\end{align}
By the first-order optimality condition of \eqref{s31},
\begin{align*}
0\in \partial f(x^{k+1})+\frac{1}{\sigma_k}(x^{k+1}-\bar{x}^k+\sigma_k\nabla_x H(\bar{x}^k,\bar{y}^{k+1})),
\end{align*}
or equivalently,
\begin{align*}
\frac{1}{\sigma_k}(\bar{x}^k -x^{k+1}) -\nabla_x H(\bar{x}^k,\bar{y}^{k+1})\in \partial f(x^{k+1}).
\end{align*}
Combining with the convexity of $f$ and \eqref{zhu4.2}, it follows that
\begin{align}\label{8.9}
f(x^{k+1}) &\leq f(\bar{x}^k) -\left\langle \frac{1}{\sigma_k}(\bar{x}^k -x^{k+1}) -\nabla_x H(\bar{x}^k,\bar{y}^{k+1}), \bar{x}^k -x^{k+1}\right\rangle \notag \\
&= f(\bar{x}^k) -\frac{1}{\sigma_k}\|\bar{x}^k -x^{k+1}\|^2 +\langle \nabla_x H(\bar{x}^k,\bar{y}^{k+1}), \bar{x}^k -x^{k+1}\rangle\notag \\
&= f(\bar{x}^k) -\sigma_k\|G_{\sigma_k}(\bar{x}^k,\bar{y}^{k+1})\|^2 +\sigma_k\langle\nabla_x H(\bar{x}^k,\bar{y}^{k+1}),G_{\sigma_k}(\bar{x}^k,\bar{y}^{k+1})\rangle.
\end{align}
Again by the convexity of $f$ and \eqref{s42},
\begin{align*}
f(\bar{x}^{k+1})&=f\big(\beta_k x^{k+1}+(1-\beta_k)\bar{x}^k\big)
\leq \beta_k f(x^{k+1})+(1-\beta_k)f(\bar{x}^k).
\end{align*}
This together with \eqref{8.9} gives
\begin{align}\label{4.5}
f(\bar{x}^{k+1})&\leq f(\bar{x}^k) -\beta_k\sigma_k\|G_{\sigma_k}(\bar{x}^k,\bar{y}^{k+1})\|^2 \notag \\
&\quad +\beta_k\sigma_k\langle\nabla_xH(\bar{x}^k,\bar{y}^{k+1}),G_{\sigma_k}(\bar{x}^k,\bar{y}^{k+1})\rangle.
\end{align}
We now combine \eqref{4.4} and \eqref{4.5} to obtain
\begin{align}\label{4.10}
&\mathcal{L}_{\mu_{k}}(\bar{x}^{k+1},\bar{y}^{k+1}) \notag \\
&\leq \mathcal{L}_{\mu_{k}}(\bar{x}^k,\bar{y}^k) -\frac{1}{2}(2 +2\alpha_k -L_k(1+\alpha_k)^2\tau_k)\tau_k\|\nabla_y\Phi_k(\bar{x}^k,\bar{y}^k)\|^2 \notag \\ &\quad -\frac{1}{2}(2 -L_{11}\beta_k\sigma_k)\beta_k\sigma_k\|G_{\sigma_k}(\bar{x}^k,\bar{y}^{k+1})\|^2 +(\mu_k-\mu_{k+1})L_g^2.
\end{align}
Next, using Remark~\ref{r:G(x,.)},
\begin{align*}
\|G_{\sigma_k}(\bar{x}^k,\bar{y}^k)\|&=\|G_{\sigma_k}(\bar{x}^k,\bar{y}^k)-G_{\sigma_k}(\bar{x}^k,\bar{y}^{k+1})+G_{\sigma_k}(\bar{x}^k,\bar{y}^{k+1})\|\notag \\
&\leq\|G_{\sigma_k}(\bar{x}^k,\bar{y}^k)-G_{\sigma_k}(\bar{x}^k,\bar{y}^{k+1})\|+\|G_{\sigma_k}(\bar{x}^k,\bar{y}^{k+1})\|\notag \\
&\leq L_{12}\|\bar{y}^{k+1}-\bar{y}^k\|+\|G_{\sigma_k}(\bar{x}^k,\bar{y}^{k+1})\|.
\end{align*}
Squaring and applying the inequality $\frac{1}{2}(a+b)^2 \leq a^2 +b^2$, we get
\begin{align*}
\frac{1}{2}\|G_{\sigma_k}(\bar{x}^k,\bar{y}^k)\|^2 &\leq L_{12}^2\|\bar{y}^{k+1}-\bar{y}^k\|^2 +\|G_{\sigma_k}(\bar{x}^k,\bar{y}^{k+1})\|^2 \\
&= L_{12}^2(1+\alpha_k)^2\tau_k^2\|\nabla_y\Phi_k(\bar{x}^k,\bar{y}^k)\|^2 +\|G_{\sigma_k}(\bar{x}^k,\bar{y}^{k+1})\|^2.
\end{align*}
which, combined with \eqref{eq:grad_y Phi_k} and \eqref{4.10}, implies that
\begin{align*}
\mathcal{L}_{\mu_{k}}(\bar{x}^{k+1},\bar{y}^{k+1}) &\leq \mathcal{L}_{\mu_{k}}(\bar{x}^k,\bar{y}^k) -\frac{1}{2}\delta_k\tau_k\|A^*\nabla g_{\mu_k}(A\bar{y}^k)+\nabla_y H(\bar{x}^k,\bar{y}^k)\|^2 \notag \\
&\quad -\frac{1}{4}\kappa_k\sigma_k\|G_{\sigma_k}(\bar{x}^k,\bar{y}^{k+1})\|^2 +(\mu_k-\mu_{k+1})L_g^2,
\end{align*}
where
\begin{align*}
\delta_k &:=2 +2\alpha_k -L_k(1+\alpha_k)^2\tau_k -(2 -L_{11}\beta_k\sigma_k)\sigma_k\beta_kL_{12}^2(1+\alpha_k)^2\tau_k, \\
\kappa_k &:=(2 -L_{11}\beta_k\sigma_k)\beta_k.
\end{align*}
Noting that $\tau_k\leq 1/L_k$ and that
\begin{align*}
(2 -L_{11}\beta_k\sigma_k)\sigma_k\beta_k\leq \frac{1}{L_{11}}\left(\frac{2 -L_{11}\beta_k\sigma_k +L_{11}\beta_k\sigma_k}{2}\right)^2 =\frac{1}{L_{11}},
\end{align*}
we have
\begin{align*}
\delta_k &\geq 2 +2\alpha_k -(1+\alpha_k)^2 -\frac{L_{12}^2}{L_{11}L_k}(1+\alpha_k)^2 \\
&= 1 -\alpha_k^2 -\frac{L_{12}^2}{L_{11}L_k}(1+\alpha_k)^2 \\
&\geq \delta =1 -\alpha^2 -\frac{L_{12}^2(1+\alpha)^2}{L_{11}(L_{22} +2\rho\|A\|^2)},
\end{align*}
where the last inequality follows from from the fact that $\alpha_k\in [-\alpha, \alpha]\subseteq [-1, \alpha]$ and that $L_k =L_{22} +\|A\|^2\max\{\frac{1}{\mu_k}, \frac{\rho}{1 -\rho\mu_k}\}\geq L_{22} +2\rho\|A\|^2$ since
\begin{align*}
\max\left\{\frac{1}{\mu_k}, \frac{\rho}{1 -\rho\mu_k}\right\} =\begin{cases}
\frac{1}{\mu_k} &\text{if~} 0 < \mu_k \leq \frac{1}{2\rho}, \\
\frac{\rho}{1 -\rho\mu_k} &\text{if~} \frac{1}{2\rho} < \mu_k < \frac{1}{\rho}.
\end{cases}
\end{align*}
Finally, we derive from $\sigma_k\in \sigma$ and $\beta_k\in [\beta, 1]$ that
\begin{align*}
\kappa_k =(2 -L_{11}\beta_k\sigma_k)\beta_k \geq (2 -L_{11}\beta_k\sigma)\beta_k \geq \kappa =\min\{(2 -L_{11}\beta\sigma)\beta, 2 -L_{11}\sigma\}
\end{align*}
as $(2 -L_{11}\beta_k\sigma)\beta_k$ is a quadratic function of $\beta_k$ with leading coefficient $-L_{11}\sigma <0$. This completes the proof.
\end{proof}

Obviously, Assumption \ref{a:g} implies that $\liminf_{k\rightarrow \infty} \mathcal{L}_{\mu_k}(x^k,y^k)>-\infty$. From now on, we denote
\begin{align*}
\mathcal{L}^* :=\liminf_{k\rightarrow \infty} \mathcal{L}_{\mu_k}(x^k,y^k).
\end{align*}

\begin{lemma}
\label{l:e-stationary}
Suppose that Assumptions~\ref{a:H} and \ref{a:g} hold. Let $(\bar{x}^k,\bar{y}^k)$ be the sequence generated by Algorithm~\ref{algo}. Then
\begin{align*}
\min_{1\leq j\leq k}\left(\|A^*\nabla g_{\mu_j}(A\bar{y}^j)+\nabla_y H(\bar{x}^j,\bar{y}^j)\|+\|G_{\sigma_j}(\bar{x}^j,\bar{y}^j)\|\right)
\leq \Theta k^{\frac{\theta-1}{2}},
\end{align*}
where $\Theta :=\sqrt{2(\ln 2)^{-1}M^{-1}(\mathcal{L}_{\mu_1}(\bar{x}^1,\bar{y}^1)-\mathcal{L}^*+\mu_1L_g^2)}$ and
\begin{align*}
M :=\min\left\{\frac{(1 -\alpha^2)\eta}{2} -\frac{L_{12}^2(1+\alpha)^2\eta}{2L_{11}(L_{22} +2\rho\|A\|^2)}, \frac{(2 -L_{11}\beta\sigma)\beta\gamma}{4}, \frac{(2 -L_{11}\sigma)\gamma}{4}\right\}.
\end{align*}
\end{lemma}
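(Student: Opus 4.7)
The plan is to sum the descent inequality from Lemma~\ref{l:LL} telescopically, convert the summation to a $\sum_{j=1}^k j^{-\theta}$ bound via the step-size lower bounds from Step~2 of Algorithm~\ref{algo}, and finally invoke Lemma~\ref{l:estimate} to extract the claimed $O(k^{(\theta-1)/2})$ rate on the minimum stationarity residual.

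First, applying Lemma~\ref{l:LL} at indices $j=1,\ldots,k$ and telescoping the sequence $\mathcal{L}_{\mu_j}(\bar{x}^j,\bar{y}^j)$ on the energy side, together with the telescoping identity $\sum_{j=1}^k(\mu_j-\mu_{j+1})=\mu_1-\mu_{k+1}\leq\mu_1$, yields
\begin{align*}
\sum_{j=1}^{k}\Bigl(\tfrac{1}{2}\delta\tau_j\|A^*\nabla g_{\mu_j}(A\bar{y}^j)+\nabla_y H(\bar{x}^j,\bar{y}^j)\|^2 +\tfrac{1}{4}\kappa\sigma_j\|G_{\sigma_j}(\bar{x}^j,\bar{y}^j)\|^2\Bigr) \leq \mathcal{L}_{\mu_1}(\bar{x}^1,\bar{y}^1) - \mathcal{L}_{\mu_{k+1}}(\bar{x}^{k+1},\bar{y}^{k+1}) + \mu_1 L_g^2.
\end{align*}
Since the partial sums on the left are monotone non-decreasing in $k$, I can pass to the limit along a subsequence realizing the $\liminf$ defining $\mathcal{L}^*$; this replaces $\mathcal{L}_{\mu_{k+1}}(\bar{x}^{k+1},\bar{y}^{k+1})$ by $\mathcal{L}^*$ in the upper bound and preserves validity for every finite $k$.

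Next, I would substitute the lower bounds $\tau_j\geq\eta j^{-\theta}$ and $\sigma_j\geq\gamma j^{-\theta}$ from Step~2 of Algorithm~\ref{algo}. The three constants appearing inside $M$ arise precisely from $\tfrac{1}{2}\delta\eta$ and the two branches of $\tfrac{1}{4}\kappa\gamma$ (recall $\kappa=\min\{(2-L_{11}\beta\sigma)\beta,\,2-L_{11}\sigma\}$), so both coefficients are at least $M j^{-\theta}$. Applying $(a+b)^2\leq 2(a^2+b^2)$ to merge the two norms into a single quantity $\varepsilon_j:=\|A^*\nabla g_{\mu_j}(A\bar{y}^j)+\nabla_y H(\bar{x}^j,\bar{y}^j)\|+\|G_{\sigma_j}(\bar{x}^j,\bar{y}^j)\|$ and factoring out the minimum over $j$ produces
\begin{align*}
\tfrac{M}{2}\bigl(\min_{1\leq j\leq k}\varepsilon_j\bigr)^2\sum_{j=1}^{k} j^{-\theta} \leq \mathcal{L}_{\mu_1}(\bar{x}^1,\bar{y}^1) - \mathcal{L}^* + \mu_1 L_g^2.
\end{align*}

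The main obstacle is producing the clean $(\ln 2)\,k^{1-\theta}$ lower bound on $\sum_{j=1}^k j^{-\theta}$ that matches the stated constant $\Theta$. I would use the integral comparison $\sum_{j=1}^k j^{-\theta}\geq\int_1^{k+1}x^{-\theta}\,dx=\frac{(k+1)^{1-\theta}-1}{1-\theta}$ and then apply Lemma~\ref{l:estimate} with $\alpha=1-\theta\in(0,1)$ and $x=k\geq 1$, which gives $(1+k)^{1-\theta}-1\geq(1-\theta)(\ln 2)k^{1-\theta}$; the awkward $1-\theta$ factor cancels and leaves $\sum_{j=1}^k j^{-\theta}\geq(\ln 2)k^{1-\theta}$. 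Substituting into the previous display, dividing by $\tfrac{M}{2}(\ln 2)k^{1-\theta}$, and taking the square root delivers $\min_{1\leq j\leq k}\varepsilon_j\leq\Theta\, k^{(\theta-1)/2}$ with $\Theta$ exactly as stated.
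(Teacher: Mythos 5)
Your proposal is correct and follows essentially the same route as the paper: telescope the descent inequality of Lemma~\ref{l:LL}, bound the final energy by $\mathcal{L}^*$ and the smoothing terms by $\mu_1 L_g^2$, insert the lower bounds $\tau_j\geq\eta j^{-\theta}$ and $\sigma_j\geq\gamma j^{-\theta}$ to pull out $M\sum_j j^{-\theta}$, estimate that sum by $(\ln 2)k^{1-\theta}$ via the integral comparison and Lemma~\ref{l:estimate}, and finish with $(a+b)^2\leq 2(a^2+b^2)$. The only (harmless) difference is that you apply the norm inequality before factoring out the minimum rather than after, and your subsequence justification of the $\mathcal{L}^*$ bound is, if anything, slightly more careful than the paper's.
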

\begin{proof}
According to Lemma~\ref{l:LL},
\begin{align}\label{4.10'}
\mathcal{L}_{\mu_{k}}(\bar{x}^{k+1},\bar{y}^{k+1}) &\leq \mathcal{L}_{\mu_{k}}(\bar{x}^k,\bar{y}^k) -\frac{1}{2}\delta\tau_k\|A^*\nabla g_{\mu_k}(A\bar{y}^k)+\nabla_y H(\bar{x}^k,\bar{y}^k)\|^2\notag \\
&\quad -\frac{1}{4}\kappa\sigma_k\|G_{\sigma_k}(\bar{x}^k,\bar{y}^k)\|^2 +(\mu_k-\mu_{k+1})L_g^2.
\end{align}
Summing \eqref{4.10'} from $k=1$ to $k=K$, we have
\begin{align}\label{4.11}
&\sum_{k=1}^K \left(\frac{1}{2}\delta\tau_k\|A^*\nabla g_{\mu_k}(A\bar{y}^k)+\nabla_y H(\bar{x}^k,\bar{y}^k)\|^2 +\frac{1}{4}\kappa\sigma_k\|G_{\sigma_k}(\bar{x}^k,\bar{y}^k)\|^2\right) \notag \\
&\leq \mathcal{L}_{\mu_1}(\bar{x}^1,\bar{y}^1)-\mathcal{L}_{\mu_{K+1}}(\bar{x}^{K+1},\bar{y}^{K+1})+(\mu_1-\mu_{K+1})L_g^2\notag \\
&\leq \mathcal{L}_{\mu_1}(\bar{x}^1,\bar{y}^1)-\mathcal{L}^*+\mu_1L_g^2.
\end{align}
Since $\sigma_k \geq\gamma k^{-\theta}$, $\tau_k\geq\eta k^{-\theta}$, and $M =\min\{\frac{1}{2}\delta\eta, \frac{1}{4}\kappa\gamma\}$, it follows from \eqref{4.11} that
\begin{multline*}
M\left(\sum_{k=1}^{K}k^{-\theta}\right) \min_{1\leq j\leq K}(\|A^*\nabla g_{\mu_j}(A\bar{y}^j)+\nabla_yH(\bar{x}^j,\bar{y}^j)\|^2+
\|G_{\sigma_j}(\bar{x}^j,\bar{y}^j)\|^2) \\ \leq\mathcal{L}_{\mu_1}(\bar{x}^1,\bar{y}^1)
-\mathcal{L}^*+\mu_1L_g^2,
\end{multline*}
Note that
\begin{align*}
\sum_{k=1}^Kk^{-\theta}&\geq \sum_{k=1}^K\int_k^{k+1}x^{-\theta}dx=\int_1^{K+1}x^{-\theta}dx=\frac{1}{1-\theta}((1+K)^{1-\theta}-1)\\
&\geq(\ln 2)K^{1-\theta},
\end{align*}
where the last inequality is obtained by applying Lemma \ref{l:estimate} with $x =K$ and $\alpha =1 -\theta$. We deduce that
\begin{multline*}
\min_{1\leq j\leq K}(\|A^*\nabla g_{\mu_j}(A\bar{y}^j)+\nabla_yH(\bar{x}^j,\bar{y}^j)\|^2+\|G_{\sigma_j}(\bar{x}^j,\bar{y}^j)\|^2) \\ \leq (\ln 2)^{-1}M^{-1}(\mathcal{L}_{\mu_1}(\bar{x}^1,\bar{y}^1)-\mathcal{L}^*+\mu_1L_g^2)k^{\theta-1}.
\end{multline*}
Using the inequality $(\|a\|+\|b\|)^2\leq2(\|a\|^2+\|b\|^2)$, we have
\begin{multline*}
\min_{1\leq j\leq k}(\|A^*\nabla g_{\mu_j}(A\bar{y}^j)+\nabla_yH(\bar{x}^j,\bar{y}^j)\|+\|G_{\sigma_j}(\bar{x}^j,\bar{y}^j)\|) \\ \leq \sqrt{2(\ln 2)^{-1}M^{-1}(\mathcal{L}_{\mu_1}(\bar{x}^1,\bar{y}^1)-\mathcal{L}^*+\mu_1L_g^2)} k^{\frac{\theta-1}{2}}.
\end{multline*}
The proof is complete.
\end{proof}

From Lemma~\ref{l:e-stationary}, we know that Algorithm~\ref{algo} achieves an $\varepsilon$-approximate solution to problem \eqref{guanghua} with a complexity of $\mathcal{O}(\varepsilon^{-\frac{2}{1-\theta}})$. We now use this result to analyze the complexity of Algorithm~\ref{algo} for solving problem \eqref{eq:L1E1}.

\begin{theorem}
Suppose that Assumptions~\ref{a:H} and \ref{a:g} hold. Let $(\bar{x}^k,\bar{y}^k)$ be the sequence generated by Algorithm~\ref{algo} with $\mu_k =\mu_1 k^{-\min\{\theta, \frac{1-\theta}{2}\}}$. Then, for $\hat{x}^k=\bar{x}^k$ and $\hat{y}^k=\bar{y}^k-A^*(AA^*)^{-1}(A\bar{y}^k-\prox_{\mu g}(A\bar{y}^k))$, it holds that
\begin{multline*}
\min_{1\leq j\leq k}(\dist(-\nabla_yH(\hat{x}^j,\hat{y}^j),A^*\partial g(A\hat{y}^j))+\dist(0,G_{\sigma_j}(\hat{x}^j,\hat{y}^j))) \\ \leq (\Theta +(L_{12}+L_{12})L_g\sigma_{\min}(A)^{-1}\mu_1)k^{-\min\{\theta, \frac{1-\theta}{2}\}},
\end{multline*}
where $\Theta :=\sqrt{2(\ln 2)^{-1}M^{-1}(\mathcal{L}_{\mu_1}(\bar{x}^1,\bar{y}^1)-\mathcal{L}^*+\mu_1L_g^2)}$ and
\begin{align*}
M :=\min\left\{\frac{(1 -\alpha^2)\eta}{2} -\frac{L_{12}^2(1+\alpha)^2\eta}{2L_{11}(L_{22} +2\rho\|A\|^2)}, \frac{(2 -L_{11}\beta\sigma)\beta\gamma}{4}, \frac{(2 -L_{11}\sigma)\gamma}{4}\right\}.
\end{align*}
\end{theorem}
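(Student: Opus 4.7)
The plan is to invoke Lemma~\ref{l:transfer} at every iterate of Algorithm~\ref{algo} and then apply the rate from Lemma~\ref{l:e-stationary} to the resulting smoothed residual. Fix $j \in \{1, \dots, k\}$ and apply Lemma~\ref{l:transfer} with $(x^*, y^*) = (\bar{x}^j, \bar{y}^j)$ and $\mu = \mu_j$; the constructed pair is precisely the $(\hat{x}^j, \hat{y}^j)$ of the theorem (reading $\mu$ as $\mu_k$ in its definition), and the lemma delivers
\begin{align*}
&\dist(0, G_{\sigma_j}(\hat{x}^j, \hat{y}^j)) + \dist(-\nabla_y H(\hat{x}^j, \hat{y}^j), A^*\partial g(A\hat{y}^j)) \\
&\quad \leq \|G_{\sigma_j}(\bar{x}^j, \bar{y}^j)\| + \dist\bigl(-\nabla_y H(\bar{x}^j, \bar{y}^j),\, A^*\partial g(\prox_{\mu_j g}(A\bar{y}^j))\bigr) + C\mu_j,
\end{align*}
where $C := (L_{12} + L_{22}) L_g \sigma_{\min}(A)^{-1}$.

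Because $\nabla g_{\mu_j}(A\bar{y}^j) \in \partial g(\prox_{\mu_j g}(A\bar{y}^j))$ by Lemma~\ref{l:weakcvx}\ref{l:weakcvx_inclu}, the inner distance above is dominated by $\|A^*\nabla g_{\mu_j}(A\bar{y}^j) + \nabla_y H(\bar{x}^j, \bar{y}^j)\|$. Setting
\[
R_j := \|A^*\nabla g_{\mu_j}(A\bar{y}^j) + \nabla_y H(\bar{x}^j, \bar{y}^j)\| + \|G_{\sigma_j}(\bar{x}^j, \bar{y}^j)\|,
\]
the per-index bound simplifies to $\dist(0, G_{\sigma_j}(\hat{x}^j, \hat{y}^j)) + \dist(-\nabla_y H(\hat{x}^j, \hat{y}^j), A^*\partial g(A\hat{y}^j)) \leq R_j + C\mu_j$. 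Taking $\min_{1 \leq j \leq k}$ on both sides, Lemma~\ref{l:e-stationary} controls the algorithmic residual via $\min_j R_j \leq \Theta k^{(\theta-1)/2}$, and it remains to convert this and the envelope term $C\mu_j$ into the common decay rate $k^{-\alpha^*}$, where $\alpha^* := \min\{\theta, (1-\theta)/2\}$.

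With the schedule $\mu_j = \mu_1 j^{-\alpha^*}$, the inequality $\alpha^* \leq (1-\theta)/2$ gives $k^{(\theta-1)/2} \leq k^{-\alpha^*}$, while $\mu_k = \mu_1 k^{-\alpha^*}$ realises the same exponent for the envelope remainder. Combining the two contributions inside the single minimum then yields the claimed pre-factor $(\Theta + C\mu_1) k^{-\alpha^*}$. The main obstacle is the pairing inside this minimum: the index $j^\ast$ for which $R_{j^\ast}$ is of order $k^{(\theta-1)/2}$ is not \emph{a priori} the same as the index $k$ at which $\mu_j$ is smallest, so the argument must bundle both terms into a single expression $R_j + C\mu_j$, exploit the monotone decay of $\mu_j$, and extract the common factor $k^{-\alpha^*}$; once this balancing is done, the constants $\Theta$ and $C\mu_1$ consolidate into the stated bound.
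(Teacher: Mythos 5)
Your route is essentially the paper's: transfer each iterate via Lemma~\ref{l:transfer}, dominate the subgradient distance by the smoothed residual using Lemma~\ref{l:weakcvx}\ref{l:weakcvx_inclu}, invoke the rate of Lemma~\ref{l:e-stationary}, and match exponents through $k^{(\theta-1)/2}\leq k^{-\min\{\theta,(1-\theta)/2\}}$. All of that is correct. The gap is exactly the step you yourself flag as ``the main obstacle'' and then do not carry out: bounding $\min_{1\leq j\leq k}(R_j+C\mu_j)$. Knowing $\min_j R_j\leq\Theta k^{(\theta-1)/2}$ tells you nothing about $\mu_{j^*}$ at the minimizing index $j^*$; since $\mu_j$ is \emph{decreasing}, $j^*$ could well be small (even $j^*=1$), in which case $R_{j^*}+C\mu_{j^*}\leq \Theta k^{(\theta-1)/2}+C\mu_1$, which does not decay. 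Saying the argument ``must bundle both terms and exploit the monotone decay of $\mu_j$'' names the difficulty without resolving it; as written, your proof does not establish the stated rate.

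A concrete way to close this is to restrict the minimum to the tail $j\in\{\lceil k/2\rceil,\dots,k\}$: the summation bound \eqref{4.11} restricted to these indices still gives $\min_{\lceil k/2\rceil\leq j\leq k}R_j\leq \sqrt{2}\,\Theta\, k^{(\theta-1)/2}$ (up to an absolute constant, since $\sum_{j=\lceil k/2\rceil}^{k}j^{-\theta}\gtrsim k^{1-\theta}$), while for every such $j$ one has $\mu_j\leq\mu_{\lceil k/2\rceil}\leq 2\mu_1 k^{-\min\{\theta,(1-\theta)/2\}}$, so both terms decay at the common rate for the \emph{same} index. This yields the theorem's order of convergence but with a slightly larger prefactor than $\Theta+C\mu_1$. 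For comparison, the paper orders the steps differently: it first takes the minimum of the untransferred quantities (getting $\Theta k^{(\theta-1)/2}$) and only then adds the transfer penalty, written directly as $C\mu_k$; this sidesteps the index-pairing issue notationally rather than addressing it, so your observation identifies a real subtlety that the published argument also glosses over. Either supply the tail-restriction argument (accepting the constant degradation) or justify why the penalty may be evaluated at $\mu_k$ uniformly; without one of these, the final inequality is unproved.
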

\begin{proof}
Using Lemmas~\ref{l:e-stationary} and \ref{l:weakcvx}, we obtain
\begin{align*}
&\min_{1\leq j\leq k}(\dist(-\nabla_yH(\bar{x}^j,\bar{y}^j),A^*\partial g(\prox_{\mu_j g}(A\bar{y}^j))+\dist(0,G_{\sigma_j}(\bar{x}^j,\bar{y}^j))) \\
&\leq\min_{1\leq j\leq k}(\|A^*\nabla g_{\mu_j}(A\bar{y}^j)+\nabla_yH(\bar{x}^j,\bar{y}^j)\|+\|G_{\sigma_j}(\bar{x}^j,\bar{y}^j)\|)\leq \Theta k^{\frac{\theta-1}{2}},
\end{align*}
which together with Lemma~\ref{l:transfer} implies that
\begin{align*}
&\min_{1\leq j\leq k}(\dist(-\nabla_yH(\hat{x}^j,\hat{y}^j),A^*\partial g(A\hat{y}^j))+\dist(0,G_{\sigma_j}(\hat{x}^j,\hat{y}^j))) \\
&\leq \Theta k^{\frac{\theta-1}{2}} +(L_{12}+L_{12})L_g\sigma_{\min}(A)^{-1}\mu_k \\
&= \Theta k^{-\frac{1-\theta}{2}} +(L_{12}+L_{12})L_g\sigma_{\min}(A)^{-1}\mu_1 k^{-\min\{\theta, \frac{1-\theta}{2}\}} \\
&\leq (\Theta +(L_{12}+L_{12})L_g\sigma_{\min}(A)^{-1}\mu_1)k^{-\min\{\theta, \frac{1-\theta}{2}\}}.
\end{align*}
The proof is complete.
\end{proof}

\begin{remark}
When $\theta \in (0,\frac{1}{3})$, Algorithm~\ref{algo} achieves an $\varepsilon$-approximate solution to problem~\eqref{eq:L1E1} with complexity $\mathcal{O}(\varepsilon^{-\frac{1}{\theta}})$. For $\theta \in (\frac{1}{3},1)$, the complexity becomes $\mathcal{O}(\varepsilon^{-\frac{2}{1-\theta}})$. In the critical case $\theta = \frac{1}{3}$, the algorithm requires $\mathcal{O}(\varepsilon^{-3})$ iterations, which is optimal. The same complexity has been established in \cite{BW,LX} exclusively for $\theta = \frac{1}{3}$.
\end{remark}

\section{Numerical experiments}
\label{sec:Numerical}

In this section, we provide two numerical examples to compare the performance of our VsaPG algorithm (Algorithm~\ref{algo}) with several existing methods, including PALM presented in \cite{BST}, iPALM presented in \cite{PS}, GiPALM presented in \cite{GCH}, and NiPALM presented in \cite{WH}. All codes are run under MATLAB R2018a and Windows 10 system, Intel(R) Core(TM) i5-8250U CPU @ 1.60GHz. ``Iter'' represents the number of iterations, ``Time'' represents the running time, ``err'' is the error, and ``res'' represents the residual.

\begin{example}
\label{ex:signal recovery}
Consider the following sparse signal recovery problem \cite{DD}:
\begin{align}\label{6.18}
\min_{x\in \mathbb{R}^n} \|x\|_0 \text{~~subject to~~} Cx=b.
\end{align}
Here, $C\in \mathbb{R}^{m\times n}$ is a sampling matrix, $b\in \mathbb{R}^{m}$ is an observation and $x$ is the signal we would like to recover.
\end{example}

It is well known that problem \eqref{6.18} can be solved by $L_1$ regularization. However, this method may lead to bias due to the proximal operator of the 1-norm does not approach the identity even for large arguments. For this reason, nonconvex alternatives to $\|\cdot\|_1$ are often used to reduce bias including several weakly convex regularizers. In this example, we choose the minimax concave penalty (MCP), introduced in \cite{Z} and used in \cite{SG,LP}, which is a family of functions $r_{\lambda,\xi}:\mathbb{R}\rightarrow\mathbb{R}_+$ with $\xi>0$ and $\lambda>0$, and defined by
\begin{align}\label{mcp}
r_{\lambda,\xi}(z)=
\begin{cases}
\lambda|z|-\frac{z^2}{2\xi},&|z|\leq\xi\lambda,\\
\frac{\xi{\lambda}^2}{2},&\text{otherwise}.
\end{cases}
\end{align}
It is easy to see that this function is $\rho$-weak convexity with $\rho=\xi^{-1}$. The proximal operator of this function can be written as follows (see \cite{BI}):
\begin{align}\label{proxmcp}
\prox_{\gamma,r_{\lambda,\xi}}(z)=
\begin{cases}
0,&|z|\leq\gamma\lambda,\\
\frac{z-\lambda\gamma \mbox{sgn}(z)}{1-(\gamma/\xi)},&\gamma\leq|z|\leq\xi\lambda,\\
z,&|z|\geq\xi\lambda.
\end{cases}
\end{align}
Therefore, problem \eqref{6.18} can be solved by the following transformed form:
\begin{align}\label{6.181}
\min_{x\in \mathbb{R}^n}\frac{1}{2}\|Cx-b\|_2^2+\sum_{i=1}^m r_{\lambda,\xi}(x_i).
\end{align}

To solve problem \eqref{6.181}, we introduce a new variable $y\in \mathbb{R}^n$, then model \eqref{6.181} is transformed into:
\begin{align}\label{4.2}
\min_{(x,y)\in\mathbb{R}^n\times\mathbb{R}^n}\frac{1}{2}\|Cx-b\|_2^2+\sum_{i=1}^nr_{\lambda,\xi}(y_i)+\frac{\mu}{2}\|x-y\|_2^2,
\end{align}
where $\mu>0$ is a penalty parameter. Model \eqref{4.2} satisfies the form of problem \eqref{eq:L1E1} when we set $f(x)=\frac{1}{2}\|Cx-b\|_2^2$, $g(Ay)=\sum_{i=1}^nr_{\lambda,\xi}(y_i)$ (where $A$ is equal to the identity matrix $I$) and $H(x,y)=\frac{\mu}{2}\|x-y\|_2^2$.

For model \eqref{4.2}, each element of $C$ is taken from a standard normal distribution, and then all columns of $C$ are normalized. We generate a random sparse vector $x$ in $\mathbb{R}^n$ with a sparsity of 0.03, where the non-zero entries are drawn from $N(0,1)$.   The noise vector $\omega \thicksim N(0,10^{-3}I)$, $b=Cx+\omega$, and the regularization parameter $\lambda=0.01\|C^Tb\|_\infty$. The residual at iteration $k$ is defined as $r^k=x^k-y^k$, and the stopping criterion for all algorithms in the experiment is
\begin{align*}
\frac{\|r^k\|}{\mbox{max}\{\|x^k\|,\|y^k\|\}}<err, \;\text{or} \;maxiter=5000.
\end{align*}

The parameters are set as follows:

VsaPG: $\tau_k=\frac{1}{L_k}=\frac{1}{L_{22}+\mu_k^{-1}}$, $\sigma_k=\frac{1}{L_{11}}$, $L_{22}=5$, $L_{11}=5$ , $\mu=5$, $\alpha_k=0.2$ and $\beta_k=0.99$.

PALM: $c_k=18$, $d_k=18$ and $\mu=5.$

iPALM: $c_k=18$, $d_k=18$, $\mu=5$, $\alpha_k=0.2$, $\beta_k=0.2$, $\tilde{\alpha}_k=0.2$ and $\tilde{\beta}_k=0.2$.

GiPALM: $c_k=18$, $d_k=18$, $\mu=5$, $\alpha=0.2$ and $\beta=0.2.$

NiPALM: $c_k=18$, $d_k=18$, $\mu=5$, $\alpha_k=0.2$, $\beta_k=0.2$, $\tilde{\alpha}_k=0.2$ and $\tilde{\beta}_k=0.2$, $\alpha=0.2$ and $\beta=0.2.$

Table \ref{tab1} reports the number of iterations and CPU time of VsaPG, PALM, iPALM, GiPALM, and NiPALM under identical dimensionality settings across varying error levels. Tables \ref{tab2} and \ref{tab3} compare the iteration counts and CPU time of VsaPG, PALM, iPALM, GiPALM, and NiPALM under fixed error levels across varying dimensions. Figures \ref{fig1} compare the objective values and residuals of VsaPG,  PALM, iPALM, GiPALM, and NiPALM under the configuration $err=10^{-3}$, $m=128$, and $n=512$. The results demonstrate that VsaPG outperform PALM, iPALM, GiPALM, and NiPALM in both iteration count and CPU time. Specifically, VsaPG   achieve a runtime reduction of at least 20\% compared to PALM, iPALM, GiPALM, and NiPALM.

\begin{table}[htbp]
  \centering
  \caption{Comparison of different algorithms regarding different errors when $m = 128$ and $n = 512$}
  \renewcommand{\arraystretch}{1.5}
  \resizebox{\textwidth}{!}{
  \begin{tabular}{ccccccccccc}
    \toprule
    \multirow{2}*{Algorithm} & \multicolumn{2}{c}{\(err=10^{-2}\)} &\multicolumn{2}{c}{\(err=10^{-3}\)} & \multicolumn{2}{c}{\(err=10^{-4}\)}& \multicolumn{2}{c}{\(err=10^{-5}\)}& \multicolumn{2}{c}{\(err=10^{-6}\)} \\\cline{2-11}
                 & Iter & Time    & Iter & Time   & Iter & Time   &Iter &Time   &Iter &Time\\
    \midrule
    VsaPG        & \textbf{11}   & \textbf{0.0012}  & \textbf{23}   & \textbf{0.0165} & \textbf{40}   & \textbf{0.0173} &\textbf{103}   &\textbf{0.0249} &\bf{124}  &\bf{0.0340}\\
    PALM         & 32        & 0.0384       & 76        & 0.0318      & 179       & 0.0464      &309        &0.1974      &683       &0.2409\\
    iPALM        & 26        & 0.0174       & 62        & 0.0308      & 149       & 0.0380      &551        &0.1308      &596       &0.1853\\
    GiPALM       & 26        & 0.0163       & 61        & 0.0258      & 144       & 0.0362      &248        &0.0828      &295       &0.0480 \\
    NiPALM       & 25        & 0.0081       & 60        & 0.0298      & 144       & 0.0255      &422        &0.1194      &465       &0.0704\\
    \bottomrule
  \end{tabular}
  \label{tab1}
  }
\end{table}

\begin{table}[htbp]
  \centering
  \caption{Comparison of different algorithms regarding different dimensions when $err = 10^{-6}$}
  \renewcommand{\arraystretch}{1.5}
  \resizebox{\textwidth}{!}{
  \begin{tabular}{ccccccccc}
    \toprule
    \multirow{2}*{Algorithm} & \multicolumn{2}{c}{\(m=128,n=512\)} & \multicolumn{2}{c}{\(m=256,n=1024\)} & \multicolumn{2}{c}{\(m=512,n=2048\)} & \multicolumn{2}{c}{\(m=1024,n=4096\)}\\ \cline{2-9}
                 & Iter & Time    & Iter & Time    & Iter  & Time     & Iter & Time      \\
    \midrule
    VsaPG        & \textbf{124}  &\textbf{0.0340}   & \textbf{124}  & \textbf{0.1376}  & \textbf{126}   & \textbf{0.3954}   & \textbf{256}  & \textbf{2.6187}  \\
    PLAM         & 683       &0.2409        & 368       & 0.2826       & 397        & 1.3873        & 450       & 6.0785  \\
    iPALM        & 596       &0.1853        & 306       & 0.2363       & 335        & 1.2484        & 394       & 5.2087  \\
    GiPALM       & 295       &0.0480        & 297       & 0.2036       & 523        & 1.9426        & 361       & 4.7468  \\
    NiPALM       & 465       &0.0704        & 296       & 0.1791       & 323        & 0.8845        & 369       & 4.7908 \\
    \bottomrule
  \end{tabular}
  \label{tab2}
  }
\end{table}

\begin{table}[htbp]
    \centering
    \caption{Comparison of different algorithms regarding different dimensions when $err=10^{-6}$}
    \renewcommand{\arraystretch}{2.0}
  \resizebox{\textwidth}{!}{

            \begin{tabular}{ccccccccccccc}
                \hline
                \multirow{2}*{Algorithm} & \multicolumn{3}{c}{\(m=500,n=1000\)} & \multicolumn{3}{c}{\(m=1000,n=2000\)} & \multicolumn{3}{c}{\(m=3000,n=6000\)} & \multicolumn{3}{c}{\(m=4000,n=8000\)}\\ \cline{2-12}
                             & Iter & Time   &res        & Iter & Time    &res        & Iter  & Time     &res        & Iter & Time     &res     \\ \hline
                VsaPG   & \textbf{539}  & \textbf{0.9638} &\textbf{2.2244e-05} & \textbf{632}  & \textbf{3.5121}  &\textbf{3.1733e-05} & \textbf{690}  & \textbf{32.1651}  &5.5627e-05 & \bf{1011}  & \bf{84.1016}  &6.3528e-05  \\
                PLAM         & 1554 & 2.3190 &2.2287e-05 & 1912 & 10.8349 &3.1946e-05 & 2569  & 122.4736 &5.5597e-05 & 2755 & 231.7659 &6.3511e-05  \\
                iPALM        & 1291 & 2.0363 &2.2261e-05 & 1578 & 8.9768  &3.1796e-05 & 2144  & 101.9632 &5.5480e-05 & 2227 & 188.2360 &\textbf{6.3250e-05}  \\
                GiPALM       & 1251 & 1.9741 &2.2283e-05 & 1537 & 8.8203  &3.1834e-05 & 1962  & 94.9705  &\textbf{5.5446e-05} & 2523 & 213.9726 &6.3355e-05  \\
                NiPALM       & 1247 & 1.8997 &2.2345e-05 & 1542 & 8.8454  &3.1955e-05 & 1922  & 90.0365  &5.5467e-05 & 2038 & 167.5054 &6.3506e-05  \\ \hline
            \end{tabular}
            \label{tab3}

    }
\end{table}

\begin{figure}[htbp!]
    \centering
    \begin{minipage}[t]{0.48\textwidth}
        \centering
        \includegraphics[width=\textwidth]{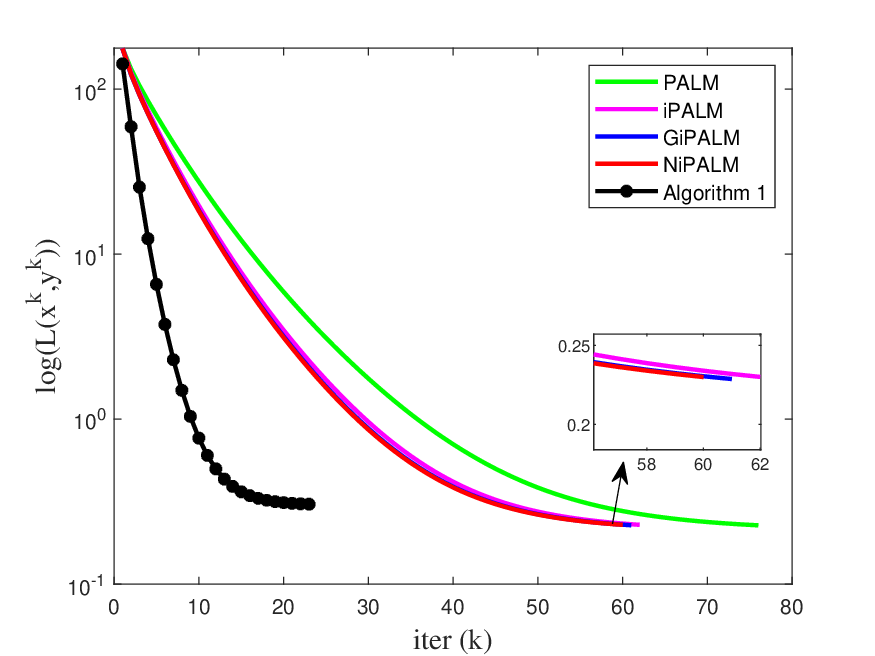}
        \caption*{ (a) Objective value}
    \end{minipage}
    \hfill
    \begin{minipage}[t]{0.48\textwidth}
        \centering
        \includegraphics[width=\textwidth]{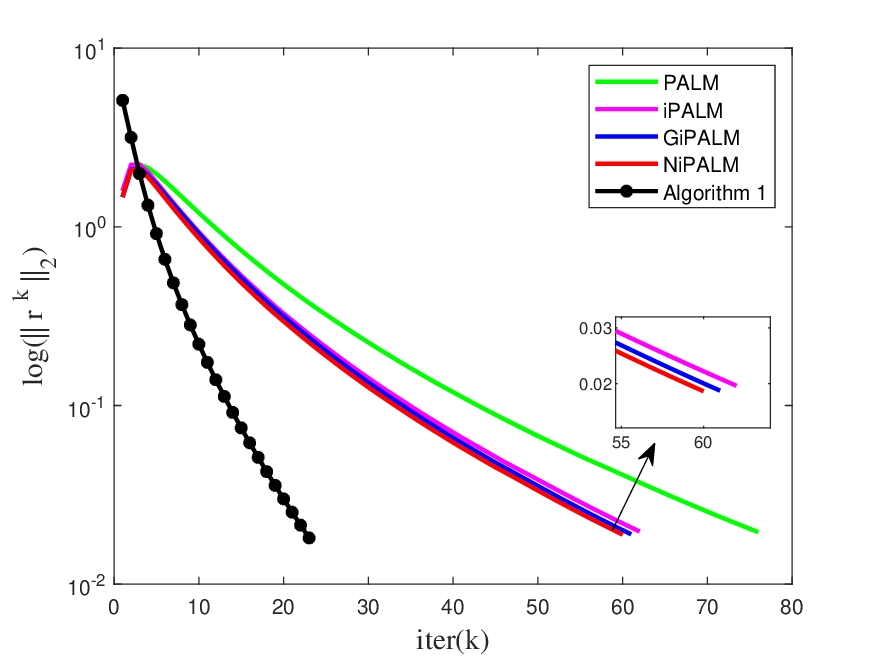}
        \caption*{ (b) residual}
    \end{minipage}

    \caption{Objective value and residual when $err=10^{-3}$, $m=128$ and $n=512$}
    \label{fig1}
\end{figure}

\begin{remark}
As discussed above, we solve problem \eqref{6.181} by transforming it into problem \eqref{4.2}. In fact, the proximal gradient method (abbreviated as PG) can also directly solve problem \eqref{6.181}, with the iterative formula being:
\begin{align*}
x^{k+1}\in \argmin_{x\in \mathbb{R}^n}\{g(x)+\frac{c_k}{2}\|x-x^k\|^2+\langle x-x^k,\nabla f(x^k)\rangle \},
\end{align*}
where $f(x)=\frac{1}{2}\|Ax-b\|_2^2$ and $g(x)=\sum_{i=1}^nr_{\lambda,\xi}(x_i)$.
\end{remark}

Below we will compare VsaPG with PG, the residual at the $k$th iteration is denoted by $r^k=x^k-x^{k-1}$, and the stopping criterion is $\frac{\|r^k\|_2}{\max\{\|x^{k-1}\|,\|x^k\|\}}\leq err$.
The parameter configuration is consistent with the previous settings. The objective valued and residual are shown in Figure \ref{fig2}. The iteration counts and computational time of VsaPG, and PG are summarized in Tables \ref{tab4} and \ref{tab5}. The results demonstrate that VsaPG exhibit superior performance to PG in both iteration efficiency and computational time.
\begin{figure}[htbp!]
    \centering
    \begin{minipage}[t]{0.48\textwidth}
        \centering
        \includegraphics[width=\textwidth]{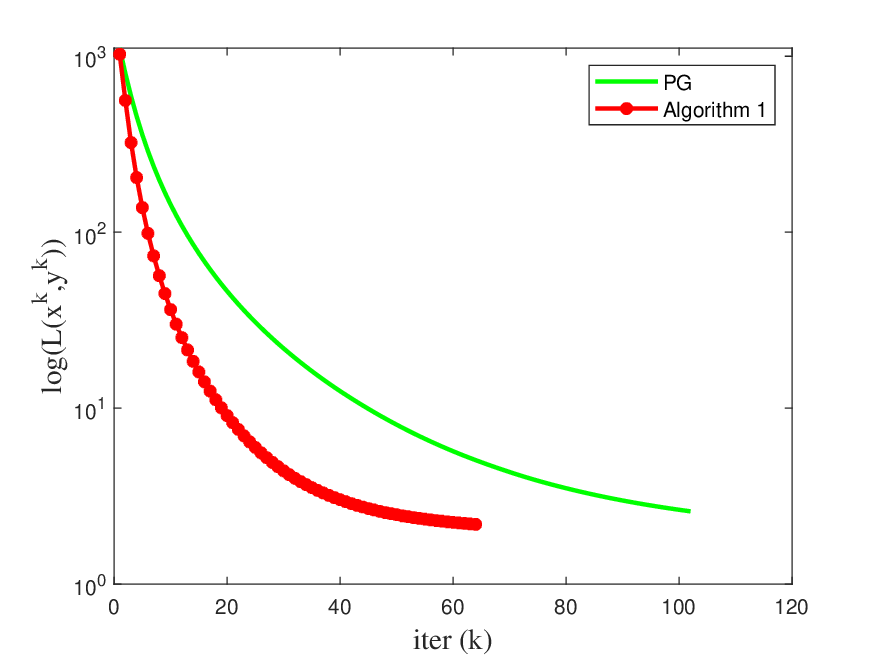}

        \caption*{(a) Objective value}
    \end{minipage}
    \hfill
    \begin{minipage}[t]{0.48\textwidth}
        \centering
        \includegraphics[width=\textwidth]{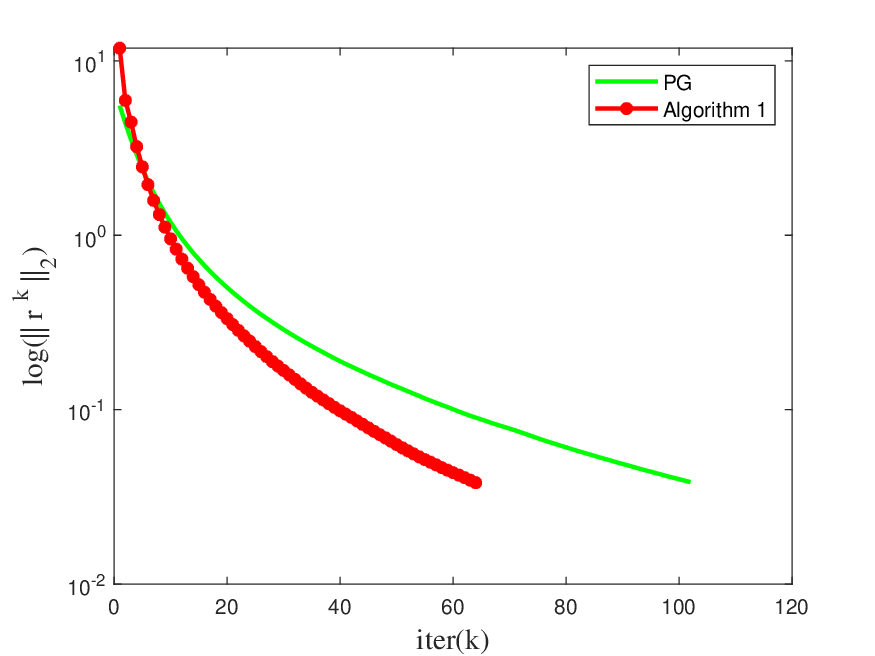}
        \caption*{(b) residual}
    \end{minipage}
    \caption{Objective value and residual when $err=10^{-3}$, $m=1500$ and $n=3000$}
    \label{fig2}
\end{figure}

\begin{table}[htbp]
  \centering
  \caption{Comparison of different algorithms regarding different errors when $m = 1500$ and $n = 3000$}
  \renewcommand{\arraystretch}{1.5}
  \resizebox{\textwidth}{!}{
  \begin{tabular}{ccccccccccc}

    \toprule
    \multirow{2}*{Algorithm} & \multicolumn{2}{c}{\(err=10^{-2}\)} &\multicolumn{2}{c}{\(err=10^{-3}\)} & \multicolumn{2}{c}{\(err=10^{-4}\)}& \multicolumn{2}{c}{\(err=10^{-5}\)}& \multicolumn{2}{c}{\(err=10^{-6}\)} \\\cline{2-11}
                 & Iter & Time    & Iter & Time   & Iter  & Time    &Iter  &Time    &Iter &Time\\
    \midrule
    VsaPG        & \textbf{18}   & \textbf{0.2270}  & \textbf{64}   & \textbf{0.7987} & \textbf{171}   & \textbf{2.1622}  &\textbf{390}   &\textbf{4.8333}  &\textbf{531}  &\textbf{6.6481}\\
    PG           & 24   & 1.0166  & 102  & 4.2090 & 297   & 12.2678 &807   &33.7639 &1562  &64.9002\\
    \bottomrule
  \end{tabular}
  \label{tab4}
  }
\end{table}

\begin{table}[htbp]
  \centering
  \caption{Comparison of different algorithms regarding different dimensions when $err = 10^{-6}$}
  \renewcommand{\arraystretch}{1.5}
  \resizebox{\textwidth}{!}{
  \begin{tabular}{ccccccccc}
    \toprule
    \multirow{2}*{Algorithm} & \multicolumn{2}{c}{\(m=500,n=1000\)} &\multicolumn{2}{c}{\(m=1000,2000\)} & \multicolumn{2}{c}{\(m=1500,n=3000\)}& \multicolumn{2}{c}{\(m=3000,6000\)} \\\cline{2-9}
                 & Iter  & Time    & Iter  & Time    &Iter   &Time       & Iter   & Time     \\
    \midrule
    VsaPG       & \textbf{452}   & \textbf{0.6723}  & \textbf{538}   &\textbf {3.3049}  &\textbf{531}    &\textbf{6.5897}     & \textbf{591}    & \textbf{28.3026}  \\
    PG           & 756   & 3.5344  & 750   & 13.6172 &1562   &64.9002    & 1038   & 164.1311 \\
    \bottomrule
  \end{tabular}
  \label{tab5}
  }
\end{table}

\begin{example}
We consider the following image denoising problem:
\begin{align}\label{5.1}
\min_{x\in \mathbb{R}^{n}} \frac{1}{2}\|x-\varepsilon\|_2^2+\lambda\|\nabla x\|_1,
\end{align}
where $\nabla x\in \mathbb{R}^{n}$ represents the discrete gradient of image $x\in \mathbb{R}^{n}$, $\varepsilon \in \mathbb{R}^{n}$ represents the input noisy image and $\lambda> 0$ is a regularization parameter.
\end{example}

Similar to Example~\ref{ex:signal recovery}, we adopt weakly convex regularizations $\sum_{i=1}^nr_{\lambda,\xi}(\nabla x_i)$ which serves as a nonconvex alternative to $\lambda\|\nabla x\|_1$. The explicit formulation of $r_{\lambda,\xi}$ and its corresponding proximal operator $\prox_{r_{\lambda,\xi}}$ are defined in \eqref{mcp} and \eqref{proxmcp}, respectively.

We consider a new variable $y\in \mathbb{R}^{n^2}$ and then transform problem \eqref{5.1} into the following problem:
\begin{align}\label{5.2}
\min_{x\in \mathbb{R}^{n^2},y\in \mathbb{R}^{n^2}} \frac{1}{2}\|x-\varepsilon\|_2^2+\sum_{i=1}^nr_{\lambda,\xi}(y_i)+\frac{\mu}{2}\|y-\nabla x\|_2^2,
\end{align}
where $\mu\geq0$ is a penalty parameter. Let $f(x)=\frac{1}{2}\|x-\varepsilon\|_2^2$, $g(Ay)=\sum_{i=1}^nr_{\lambda,\xi}(y_i)$ (where $A$ is equal to the identity matrix $I$), and $H(x,y)=\frac{\mu}{2}\|y-\nabla x\|_2^2$, then \eqref{5.2} satisfies the form of problem \eqref{eq:L1E1}.

Next, we test three images named $boy$, $Cameraman$ and $peppers$, respectively. These images are added with Gaussian white noise with zero mean and a standard deviation of 0.01. The stopping criterion for all algorithms is defined as
\begin{align*}
\|(y^{k+1},\nabla x^{k+1})-(y^k,\nabla x^k)\|<err, \;\text{or}\;maxiter=500, \text{where} \; err=10^{-2}.
\end{align*}

The parameters  are set as follows:

VsaPG: $\tau_k=\frac{1}{L_k}=\frac{1}{L_{22}+\mu_k^{-1}}$, $\sigma_k=\frac{1}{L_{11}}$, $L_{22}=5$ ,$L_{11}=5$, $\mu=1$, $\alpha_k=0.2$ and $\beta_k=0.99.$

PALM: $c_k=30$, $d_k=30$ and $\mu=1.$

GiPALM: $c_k=30$, $d_k=30$, $\mu=1$, $\alpha=0.2$ and $\beta=0.2.$

NiPALM: $c_k=30$, $d_k=30$, $\mu=1$, $\alpha_k=0.2$, $\beta_k=0.2$, $\tilde{\alpha}_k=0.2$ , $\tilde{\beta}_k=0.2$, $\alpha=0.2$ and $\beta=0.2.$

Typically, we use signal-to-noise ratio (SNR) as a measurement of denoising quality. SNR is defined by
\begin{align*}
SNR=20\log_{10}\frac{\|x^*\|_2}{\|x-x^*\|_2}.
\end{align*}
where $x^*$ and $x$ represent the original image and the restored image, respectively.

The original clean and the noisy images are shown in Figure \ref{fig3}. Table \ref{tab6} records the number of iterations, CPU time and SNR values for VsaPG, PALM, GiPALM and NiPALM for different image restoration tasks. The three denoising images recovered by VsaPG, PALM, GiPALM and NiPALM are shown in Figure \ref{fig4}. The evolution of SNR are shown in Figure \ref{fig5}.
The results demonstrate that VsaPG achieve higher efficiency than PALM, iPALM, GiPALM, and NiPALM in both CPU time and SNR.

\begin{figure}[htbp!]
	\centering
	\subfigure[Boy original ]{
		\includegraphics[width=4.3cm]{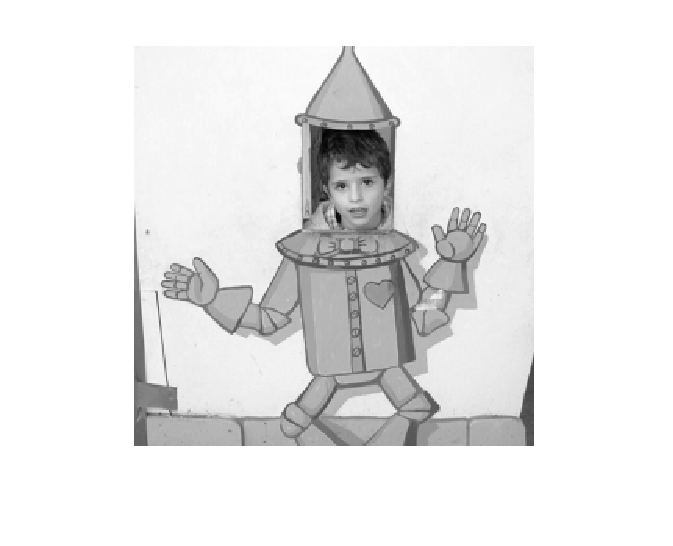}
	}
	\hspace{-1cm}
	\subfigure[Cameraman original ]{
		\includegraphics[width=4.3cm]{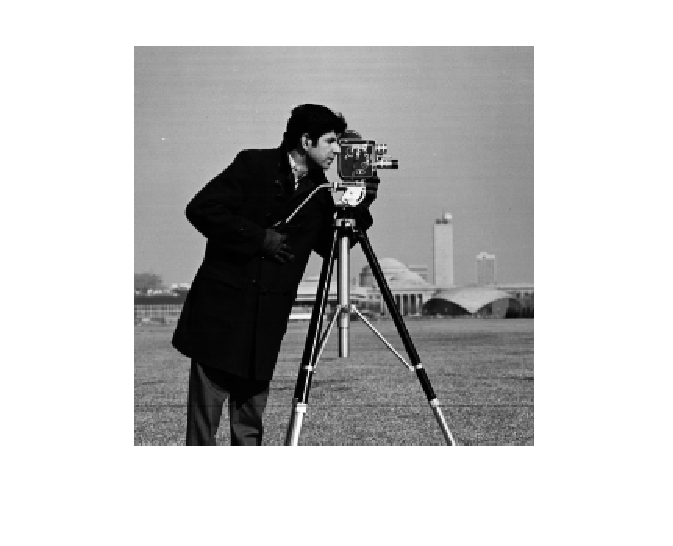}
	}
	\hspace{-1cm}
	\subfigure[Peppers original ]{
		\includegraphics[width=4.3cm]{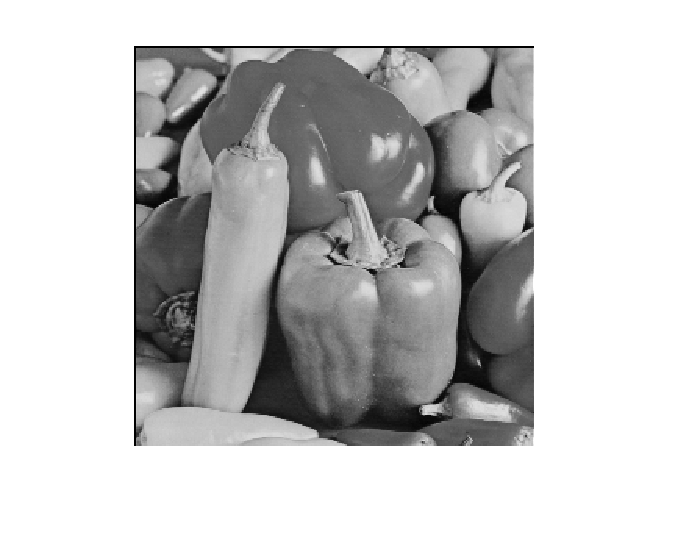}
	}
	\\
	\subfigure[Boy noisy ]{
		\includegraphics[width=4.3cm]{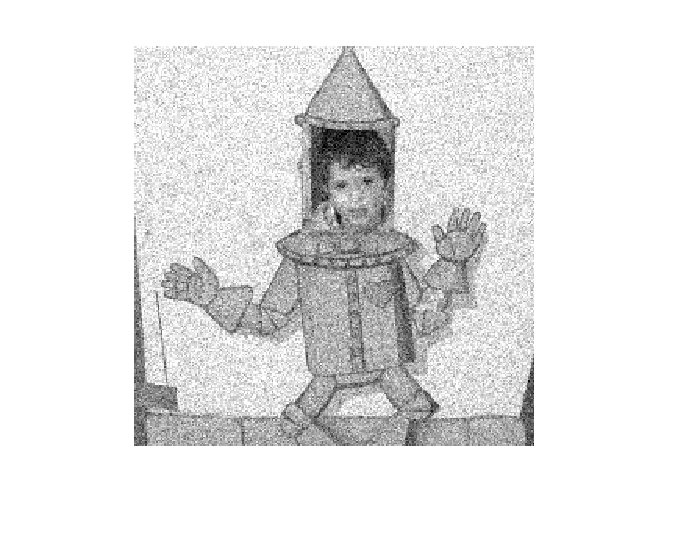}
	}
	\hspace{-1cm}
	\subfigure[Cameraman noisy ]{
		\includegraphics[width=4.3cm]{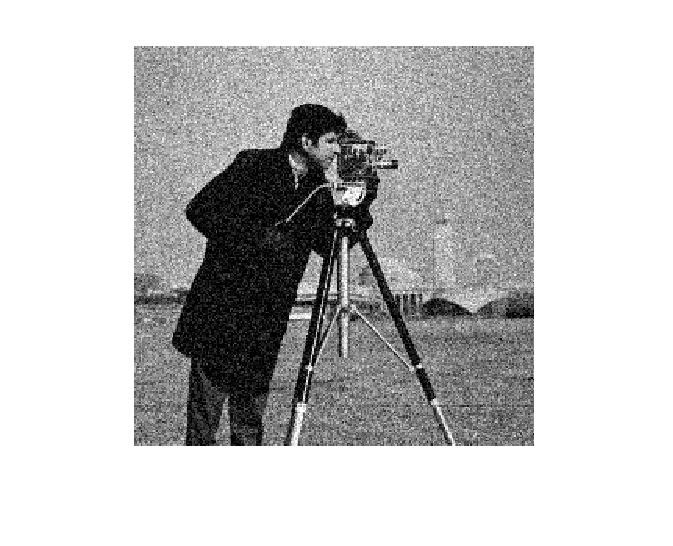}
	}
	\hspace{-1cm}
	\subfigure[Peppers noisy]{
		\includegraphics[width=4.3cm]{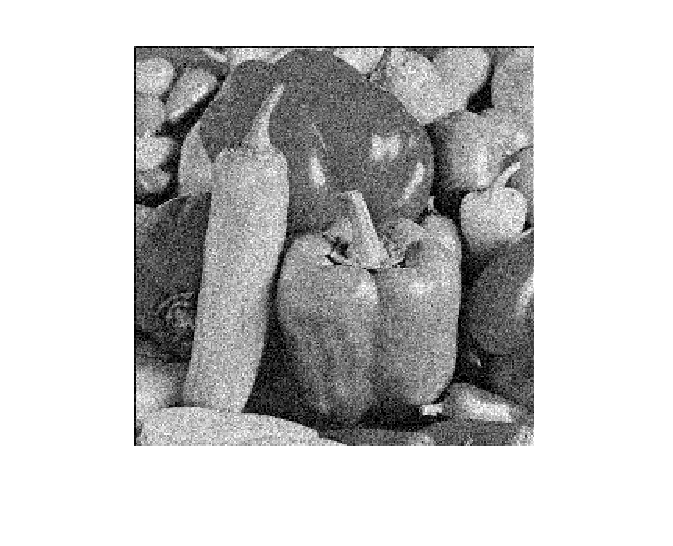}
	}
	\caption{Original images and noise images}
	\label{fig3}	
\end{figure}

\begin{figure}[htbp!]
	\centering
	
	\subfigure[VsaPG]{
		\includegraphics[width=4.3cm]{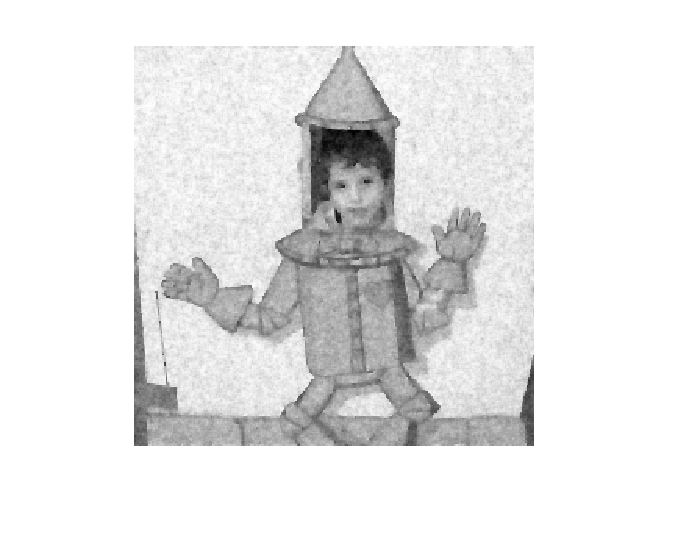}
	}
\hspace{-1cm}
	\subfigure[VsaPG]{
		\includegraphics[width=4.3cm]{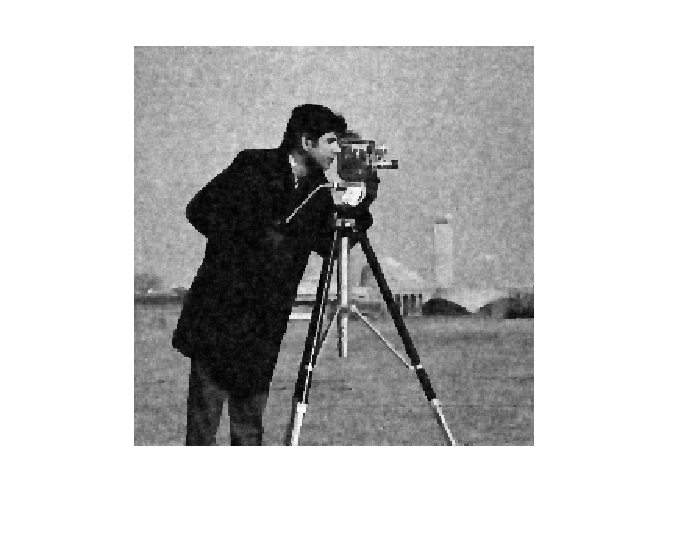}
	}
\hspace{-1cm}
	\subfigure[VsaPG]{
		\includegraphics[width=4.3cm]{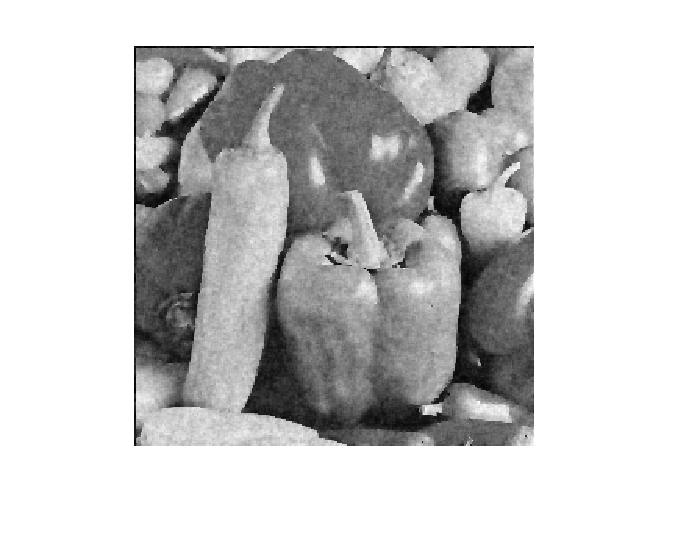}
	}
    \\
    \subfigure[PALM]{
		\includegraphics[width=4.3cm]{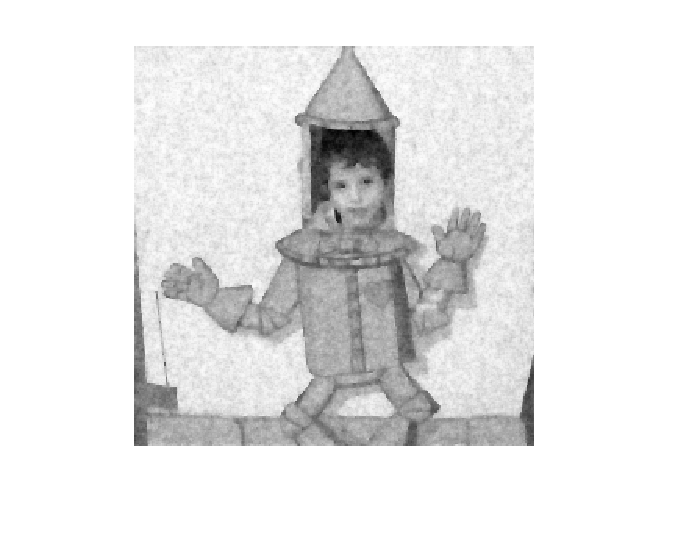}
	}
\hspace{-1cm}
	\subfigure[PALM]{
		\includegraphics[width=4.3cm]{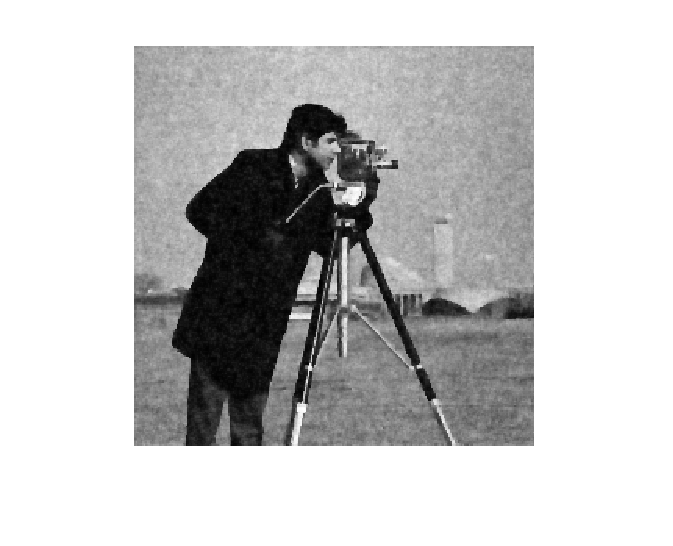}
	}
\hspace{-1cm}
	\subfigure[PALM]{
		\includegraphics[width=4.3cm]{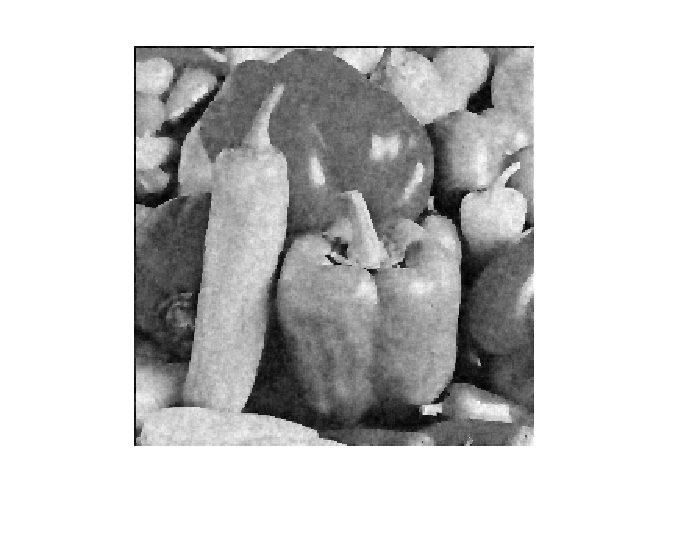}
	}
\\
	\subfigure[GiPALM]{
		\includegraphics[width=4.3cm]{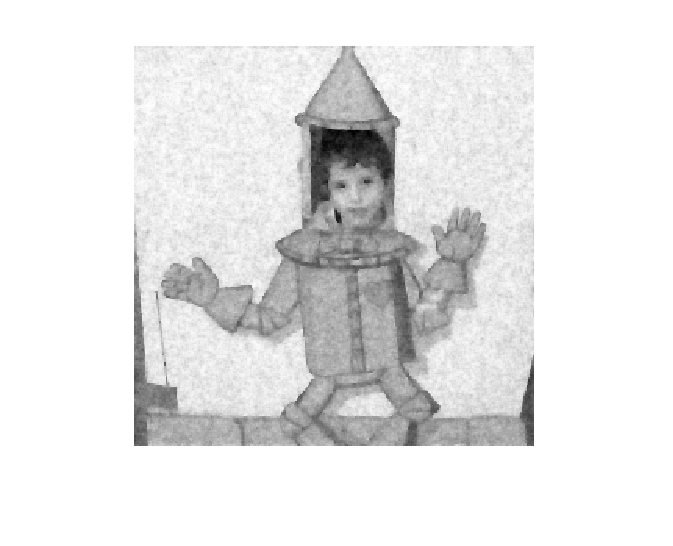}
	}
\hspace{-1cm}
	\subfigure[GiPALM]{
		\includegraphics[width=4.3cm]{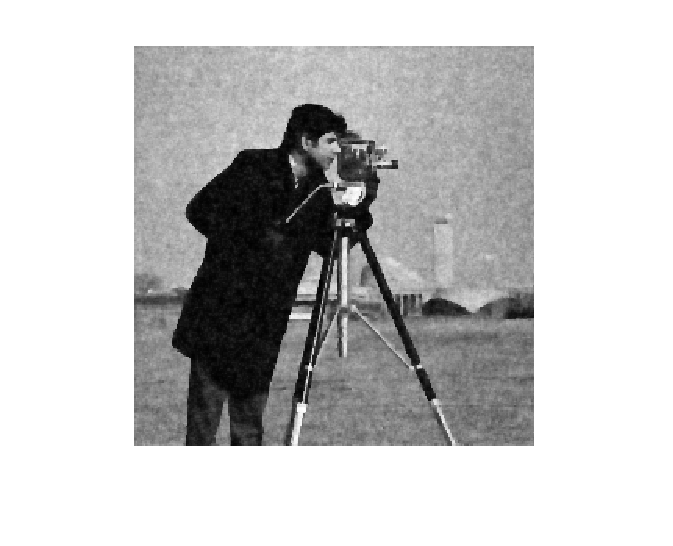}
	}
\hspace{-1cm}
	\subfigure[GiPALM]{
		\includegraphics[width=4.3cm]{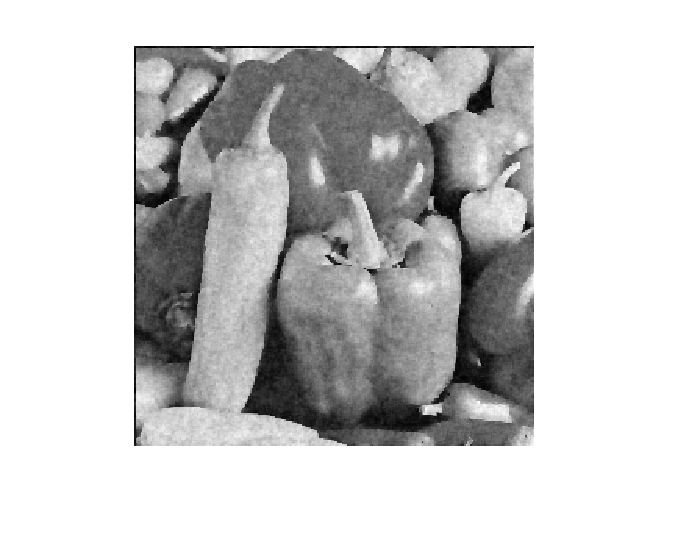}
    }
\\
\subfigure[NiPALM]{
		\includegraphics[width=4.3cm]{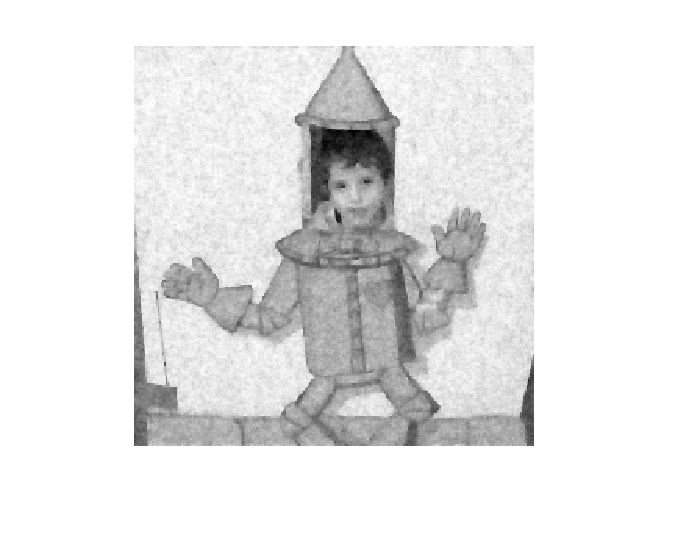}
	}
\hspace{-1cm}
	\subfigure[NiPALM]{
		\includegraphics[width=4.3cm]{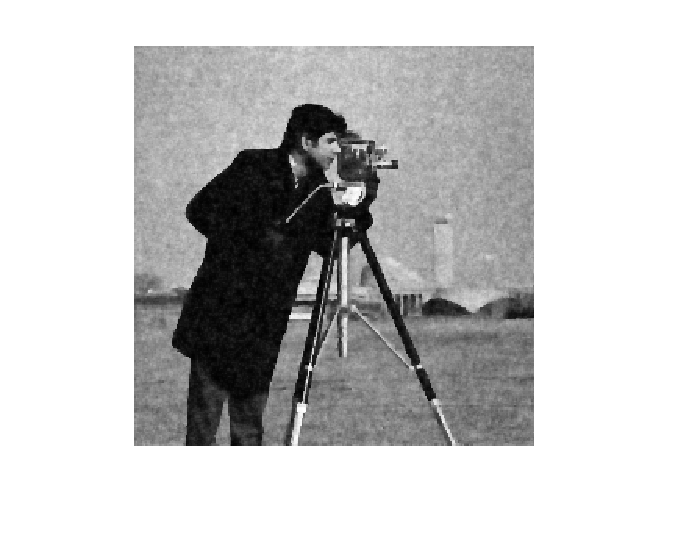}
	}
\hspace{-1cm}
	\subfigure[NiPALM]{
		\includegraphics[width=4.3cm]{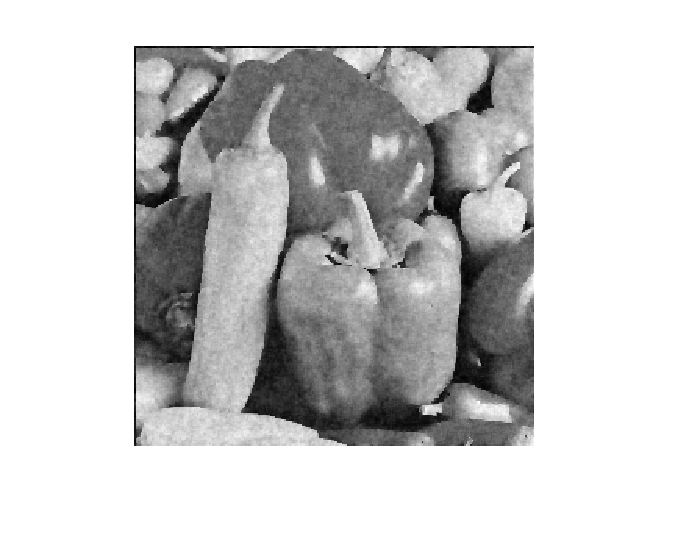}
	}
	\caption{Different Algorithms regarding the numerical effects on different images}
	\label{fig4}	
\end{figure}

\begin{table}[htbp]
  \centering
  \caption{Comparison of different Algorithms for different images}
  \renewcommand{\arraystretch}{1.5}
  \resizebox{\textwidth}{!}{
  \begin{tabular}{cccccccccc}
    \toprule
    \multirow{2}*{Algorithm} & \multicolumn{3}{c}{$Boy$} & \multicolumn{3}{c}{$Cameraman$} & \multicolumn{3}{c}{$Peppers$} \\ \cline{2-10}
                 & Iter & Time    &SNR        & Iter & Time     &SNR        & Iter  & Time     &SNR         \\
    \midrule
    VsaPG       & \textbf{149}  & \textbf{15.2344} &\textbf{38.4914}    & \textbf{176}  & \textbf{18.0000}  &\textbf{33.8557}    & \textbf{219}   & \textbf{22.9531}  &\textbf{36.0595}   \\
    PALM         & 266  & 26.3594 &38.0043    & 276  & 27.4844  &33.5723    & 446   & 46.3125  &34.9240   \\
    GiPALM       & 221  & 22.1092 &38.0039    & 230  & 23.3906  &33.5746    & 371   & 38.6094  &34.9306   \\
    NiPALM       & 248  & 26.2656 &38.2167    & 262  & 27.2188  &33.6710    & 401   & 42.6875  &35.2014   \\
    \bottomrule
  \end{tabular}
  \label{tab6}
  }

\end{table}

\begin{figure}[htbp!]
    \centering
    \begin{minipage}[t]{0.3\textwidth}
        \centering
        \includegraphics[width=\textwidth]{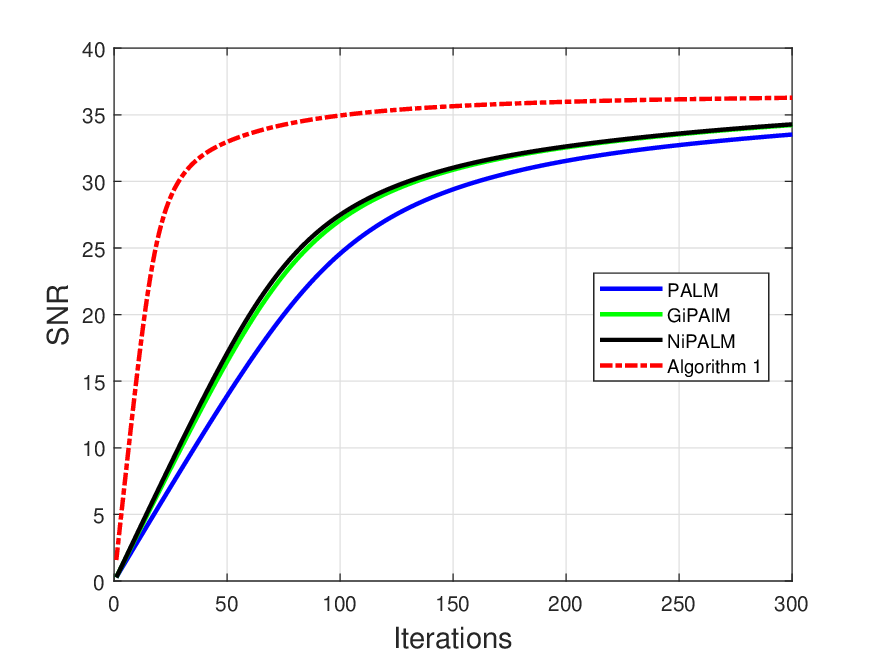}

        \caption*{(a) Peppers}
    \end{minipage}
    \begin{minipage}[t]{0.3\textwidth}
        \centering
        \includegraphics[width=\textwidth]{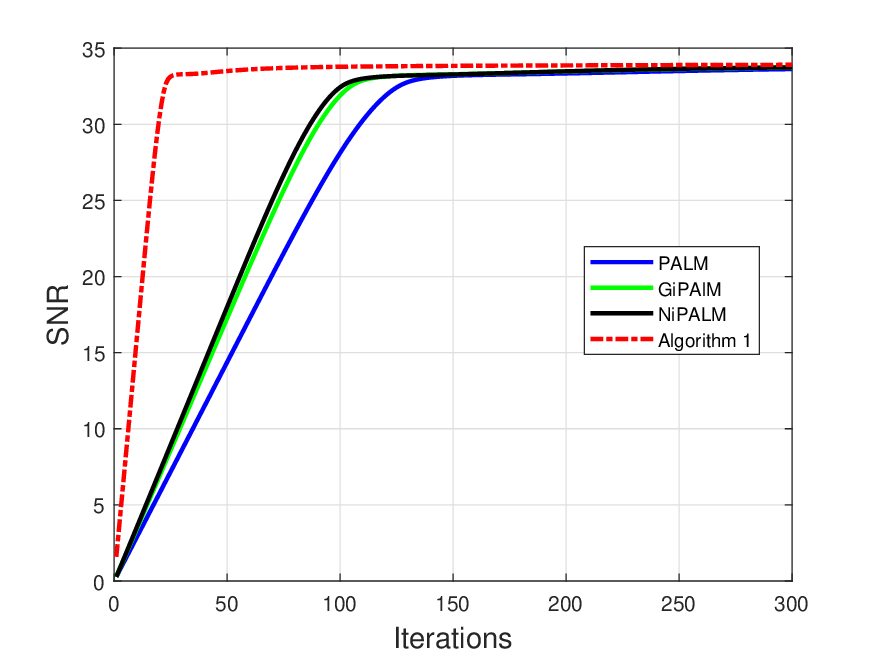}
        \caption*{(b) Cameraman}
    \end{minipage}
    \begin{minipage}[t]{0.3\textwidth}
        \centering
        \includegraphics[width=\textwidth]{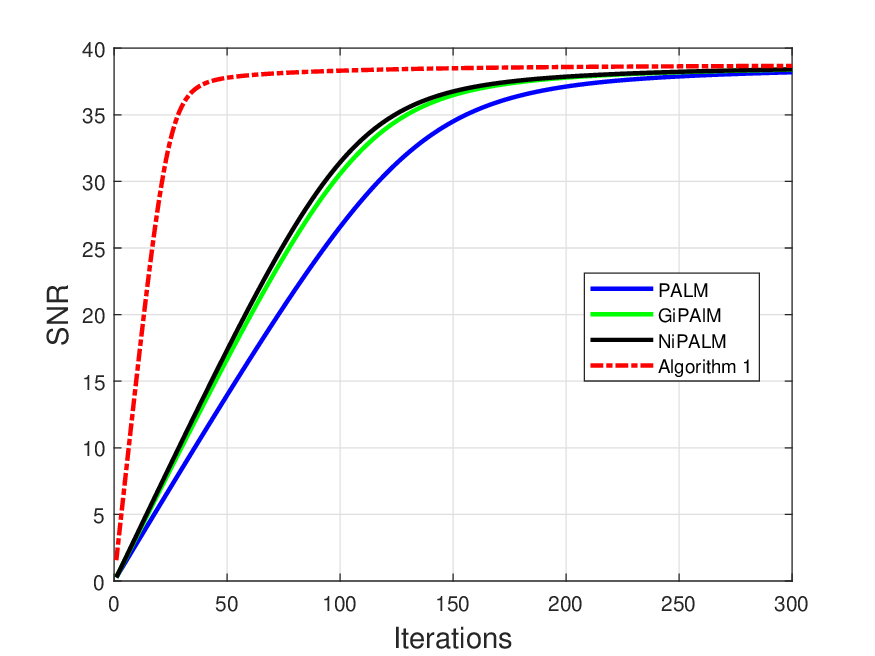}
        \caption*{(c) Boy}
    \end{minipage}
    \caption{SNR values of different test problems}
    \label{fig5}
\end{figure}

\section{Conclusion}

We have proposed a variable smoothing alternating proximal gradient algorithm for solving \eqref{eq:L1E1}, which integrates first-order methods with variable smoothing techniques and allows flexible choices of step sizes and smoothing parameters. Under suitable assumptions, an iteration complexity of $\mathcal{O}(\varepsilon^{-3})$ has been established for obtaining an $\varepsilon$-approximate solution. Numerical experiments on sparse signal recovery and image denoising problems have shown that the proposed algorithm outperforms existing methods.

\end{document}